\newcommand*{\QEDA}{\hfill\hbox{\vrule width1.0ex height1.0ex}}
\newtheorem{thm}{Theorem}[section]
\newtheorem{theorem}[thm]{Theorem}
\newtheorem{lemma}[thm]{Lemma}
\newtheorem{corollary}[thm]{Corollary}
\newtheorem{definition}[thm]{Definition}
\newcommand{\beq}{\begin{equation}}
\newcommand{\eeq}{\end{equation}}
\newcommand{\beqa}{\begin{eqnarray}}
\newcommand{\eeqa}{\end{eqnarray}}
\newcommand{\beqas}{\begin{eqnarray*}}
\newcommand{\eeqas}{\end{eqnarray*}}
\newcommand{\bi}{\begin{itemize}}
\newcommand{\ei}{\end{itemize}}
\newcommand{\vgap}{\vspace{.1in}}
\newcommand{\nn}{\nonumber}
\newcommand{\R}{\mathbb{R}}
\newcommand{\lam}{{\lambda}}
\newcommand{\inner}[2]{\langle #1,#2\rangle}
\newcommand{\argmin}{\mathrm{argmin}\,}
\newcommand{\dom}{\mathrm{dom}\,}
\newcommand{\bConv}[1]{\overline{\mbox{\rm Conv}}\,(\R^{#1})}
\newcommand{\tx}{\tilde x}
\newcommand{\ty}{\tilde y}
\def\NN{{\mathbb{N}}}
\begin{document}
\title{Average Curvature FISTA for Nonconvex Smooth\\ Composite Optimization Problems}
\date{May 13, 2021 \\ first revision: March 10, 2022 \\
second revision: February 3, 2023}
\maketitle

\begin{center}
			\textsc{Jiaming Liang 
   \footnote{Department of Computer Science, Yale University, New Haven, CT 06511.
			(email: {\tt jiaming.liang@yale.edu}).
			This author
			was partially supported by
			ONR Grant
			N00014-18-1-2077 and NSF grant CCF-1740776 through IDEaS-TRIAD Research Scholarship from Georgia Tech.
			}
            and 
			Renato D.C. Monteiro} \footnote{School of Industrial and Systems
			Engineering, Georgia Institute of
			Technology, Atlanta, GA, 30332-0205.
			(email: {\tt monteiro@isye.gatech.edu}). This author
			was partially supported by ONR Grant
			N00014-18-1-2077 and	AFOSR Grant FA9550-22-1-0088.}
\end{center}

\begin{abstract}


A previous authors' paper introduces an
accelerated composite gradient (ACG) variant, namely
AC-ACG, for solving nonconvex smooth composite optimization (N-SCO) problems.
In contrast to other ACG variants, AC-ACG
estimates the local upper curvature of the N-SCO problem
by using the average of the observed
upper-Lipschitz curvatures obtained during the previous iterations,
and uses this estimation and two composite resolvent evaluations to
compute the next iterate. This paper presents an alternative
FISTA-type ACG variant, namely AC-FISTA,
which has the following additional
features:
i) it performs an average of one composite resolvent evaluation per iteration;
and ii) it estimates the local upper curvature  by using the
average of the previously observed
upper (instead of upper-Lipschitz) curvatures.
These two properties acting together yield  a practical
AC-FISTA variant which
substantially outperforms earlier ACG variants, including
the  AC-ACG variants discussed in the aforementioned authors' paper.


\end{abstract}

\vspace{.1in}

{\bf Key words.} 
nonconvex smooth composite optimization, average curvature,
accelerated composite gradient methods, FISTA,
first-order methods, line search free methods.

\vspace{.1in}

{\bf AMS subject classifications.} 49M05, 49M37, 65K05, 68Q25, 90C26, 90C30.

\section{Introduction}
This paper studies a FISTA-type accelerated composite gradient (ACG) algorithm, namely the AC-FISTA method, for solving the nonconvex smooth composite optimization (N-SCO) problem
\begin{equation}\label{eq:PenProb2Intro}
	\phi_*:=\min \left\{ \phi(z):=f(z) + h(z)  : z \in \mathbb{R}^n \right \}
\end{equation}
where $h:\R^n \to (-\infty,\infty]$ is a proper lower semicontinuous convex function and
$f$ is a real-valued differentiable (possibly nonconvex) function with an $L$-Lipschitz continuous gradient
on a compact convex set containing
$\dom h $.
The N-SCO problem \eqref{eq:PenProb2Intro} has a wide range of real-world applications including support vector machine \cite{LanUniformly}, sparse PCA \cite{gu2014sparse}, matrix completion \cite{yao2017efficient}, and nonnegative matrix factorization \cite{gillis2014and,lee1999learning}.
ACG methods are widely used optimization approaches for solving N-SCO problems.
A critical issue
related to the practical performance of these methods lies on
the development of efficient stepsize selection strategies.

More specifically, a key step in ACG methods for solving
the N-SCO problem \eqref{eq:PenProb2Intro} is to compute an iterate $y_{k+1}$ as the solution of
a proximal subproblem of the form 
\begin{equation}\label{eq:update}
	y_{k+1}=y(\tx_k;M_k):=\argmin \left\lbrace  \ell_f(x;\tx_k) + h(x) + \frac{M_k}2 \|x-\tx_k\|^2 : x \in \R^n \right\rbrace 
\end{equation}
where $ \ell_f(x;\tx_k):=f(\tx_k) + \inner{\nabla f(\tx_k)}{x-\tx_k} $,
 $\tx_k$ is a convex combination of
$y_k$ and another auxiliary iterate $x_k$, and
$M_k$ is a positive scalar such that
\begin{equation}\label{ineq:descent}
{\cal C}(y(\tx_k;M_k);\tx_k) \le \tau M_k
\end{equation}
where $ \tau \in(0,1] $ and
\begin{equation}\label{eq:cal C}
{\cal C}(y;\tx) := \frac{2\left[ f(y)-\ell_f(y; \tx) \right] }{\|y-\tx\|^2}.
\end{equation}
It can be shown that the smaller the sequence $ \{M_k\} $ is, the faster
the convergence rate of the method becomes.
Hence, it is desirable to choose $M_k=\bar M_k$ where $\bar M_k$
is the smallest value of $M_k$
satisfying \eqref{ineq:descent}.
Since $\bar M_k$ is hard to compute,
a large class of ACG methods for solving either
convex or nonconvex SCO problems
simply computes
a scalar $M_k$ satisfying \eqref{ineq:descent} either
by setting it to be a  sufficiently large constant or by using a
line search procedure.
Works dealing with
ACG methods for solving
nonconvex SCO problems
based on this idea are discussed in ``Other related works" below.



In contrast to the
ACG methods
based on the above ideas,
the 
AC-ACG methods of \cite{liang2021average} do not require
the next iterate $y_{k+1}$ to satisfy \eqref{ineq:descent}.
They instead follow
the natural geometrical viewpoint of choosing
$M_k$, and hence
the local approximate model
in \eqref{eq:update}, by means of average curvature information.
More specifically,
the theoretical version of AC-ACG in \cite{liang2021average} computes
$y_{k+1}$ as in \eqref{eq:update}
with $M_k$
set to be  a positive multiple of the average of all observed curvatures $\tilde {\cal C}_0, \ldots,\tilde{\cal C}_{k-1}$, where $\tilde C_i:=\tilde {\cal C}(y_{i+1};\tx_i) $ for every $i$, and
\begin{equation}\label{eq:tC}
	\tilde {\cal C} (y;\tilde x):=  \max\left\lbrace 
	{\cal C}(y;\tx), {\cal L}(y;\tx)
	\right\rbrace, \quad 
	{\cal L}(y;\tx):= \frac{\|\nabla f(y) - \nabla f(\tilde x)\|}{\|y-\tx\|}.
\end{equation}
It is shown in Theorem 2.1 of \cite{liang2021average} that, for every
$k$, the theoretical version of AC-ACG generates a pair $ (\hat y_k,\hat v_k)$ satisfying
$ \hat v_k \in \nabla f(\hat y_k) + \partial h(\hat y_k)$ and $\|\hat v_k\|^2={\cal O}(M_k/k)$, and hence that its convergence rate
directly depends on the magnitude of $M_k$.
Paper \cite{liang2021average} also presents a practical aggressive variant of AC-ACG, which computes $M_k$ using the average of the ${\cal C}_i$'s, $i=0,\ldots,k-1$, where
$C_i:= {\cal C}(y_{i+1};\tx_i) $ for every $i$, instead of the usually much larger $\tilde C_i$'s.
Most likely due to smaller size of the generated sequence $\{M_k\}$,
the practical AC-ACG variant computationally
outperforms other ACG variants,
including its theoretical variant,
but its convergence rate analysis is left open in \cite{liang2021average}.

This paper presents the AC-FISTA method
for solving \eqref{eq:PenProb2Intro}
which is a FISTA-type variant of AC-ACG, 
and establishes a convergence rate for it
similar to the one described above but with
$M_k$ obtained in the same way as in the practical AC-ACG variant.
AC-FISTA has the following advantages compared to the
theoretical  AC-ACG variant.
It computes about half  the number of
composite resolvent evaluations as that performed by AC-ACG,
and hence its iterations are computationally cheaper.
It uses $C_k$ (in place of $\tilde C_k$) to compute
$M_k$, and hence generates 
a sequence of smaller curvature estimates  $\{M_k\}$.
(It is worth noting that,
even though this last property is established in the context
of AC-FISTA, it indirectly addresses
the aforementioned open question
of \cite{liang2021average} posed in the context of AC-ACG.)
\ Finally, computational results are presented in this paper
to demonstrate that AC-FISTA substantially outperforms
previous ACG variants as well as the theoretical and practical
AC-ACG variants,
both in terms of CPU time and computed  solution
quality.

{\bf Other related works.}  
The first convergence analysis of an ACG algorithm based on \eqref{ineq:descent}
for solving the N-SCO problem \eqref{eq:PenProb2Intro} under the same assumptions as in this paper appears in \cite{nonconv_lan16}.
Inspired by \cite{nonconv_lan16}, many papers have proposed other ACG variants based on \eqref{ineq:descent}.
Algorithms presented in
\cite{Paquette2017,Lin_Zhou_Liang_Varshney,liang2019fista,Yao_et.al.} choose $ M_k $ constant as in \cite{nonconv_lan16}, i.e.,
$ M_k = L/\tau $ for every $k$, where $ L$ is the Lipschitz constant of
$ \nabla f $ and $\tau\in(0,1]$.
Moreover, algorithms discussed in \cite{LanUniformly,Li_Lin2015,liang2019fista} use line search procedures to compute a relatively small scalar $ M_k $ satisfying \eqref{ineq:descent}.

In addition to the ACG methods mentioned above, it is worth discussing
other approaches for solving \eqref{eq:PenProb2Intro} that use an inexact proximal point method where each proximal subproblem
is constructed to be (possibly strongly) convex and hence solved by a convex ACG variant.
Papers
\cite{carmon2018accelerated,KongMeloMonteiro,paquette2018catalyst} describe a descent unaccelerated inexact 
proximal-type method that works with a large prox stepsize and approximately solves a proximal subproblem by an ACG variant.
Finally, \cite{jliang2018double} proposes an accelerated inexact proximal point method, which in each outer iteration performs an accelerated step with a large prox stepsize and 
follows the same way as in the algorithms presented in  \cite{carmon2018accelerated,KongMeloMonteiro} to solve a proximal subproblem.



\textbf{Basic definitions and notation.}
Let $\mathbb{R}$ and $\mathbb{R}_+$ denote the set of real numbers and the set of non-negative real numbers, respectively.
Let $\mathbb{R}^n$ denote the standard $n$-dimensional Euclidean 
space with  inner product and norm denoted by $\left\langle \cdot,\cdot\right\rangle $
and $\|\cdot\|$, respectively. 
The 
Frobenius norm in $ \R^{m\times n} $
is denoted by 
$ \|\cdot\|_F $.
The set of real symmetric positive semidefinite matrices in $ \R^{ n\times n} $ is denoted by $ {\cal S}^n_{+} $.
Let $\lceil \cdot \rceil$ denote the ceiling function.
The cardinality of a 
finite set $ A $ is denoted by $ |A| $. 
The indicator function $ I_X $ of a set $ X\subset \R^n $ is defined as $ I_X(z)=0 $ for every $ z\in X $, and $ I_X(z)=\infty $, otherwise.
The diameter of a compact set $ X\subset \R^n $ is $D_X  :=  \sup \{  \| z-\bar z \| : z, \bar z \in X \}$.
If $ X $ is a nonempty closed convex set, the orthogonal projection $ P_{X}: \R^n \rightarrow \R^n $ onto $ X $ is defined as 
\[
P_{X}(z):=\argmin_{\bar z\in X} \|\bar z-z\| \quad \forall z\in \R^n.
\]

Let $\Psi: \mathbb{R}^n\rightarrow (-\infty,+\infty]$ be given. The effective domain of $\Psi$ is denoted by
$\dom \Psi:=\{x \in \mathbb{R}^n: \Psi (x) <\infty\}$ and $\Psi$ is proper if $\dom \Psi \ne \emptyset$.
Moreover, a proper function $\Psi: \mathbb{R}^n\rightarrow (-\infty,+\infty]$ is $\mu$-strongly convex for some $\mu \ge 0$ if
	$$
	\Psi(\beta z+(1-\beta) \bar z)\leq \beta \Psi(z)+(1-\beta)\Psi(\bar z) - \frac{\beta(1-\beta) \mu}{2}\|z-\bar z\|^2
	$$
	for every $z, \bar z \in \dom \Psi$ and $\beta \in [0,1]$.
	Let $\partial \Psi (z)$ denote the subdifferential of $\Psi$ at $z \in \dom \Psi$.
If $\Psi$ is differentiable at $\bar z \in \mathbb{R}^n$, then its affine   approximation $\ell_\Psi(\cdot;\bar z)$ at $\bar z$ is defined as
$ \ell_\Psi(z;\bar z) :=  \Psi(\bar z) + \inner{\nabla \Psi(\bar z)}{z-\bar z} $ for every $ z \in \R^n $.
The set of all proper lower semi-continuous convex functions $\Psi:\mathbb{R}^n\rightarrow (-\infty,+\infty]$  is denoted by $\bConv{n}$.

\textbf{Organization of the paper.} 
Section~\ref{sec:alg+cmplx} consists of two subsections. Subsection~\ref{subsec:AC-FISTA} describes the assumptions made on the N-SCO problem and presents the AC-FISTA method for solving it.
It also describes the main result of the paper, which
establishes a convergence rate bound for AC-FISTA in terms of the average of observed curvatures.
Subsection~\ref{subsec:practice} discusses a special case of AC-FISTA which is quite efficient in practice, and
provides reasons behind its good performance.
Section~\ref{sec:proof} provides the proof of the main result
stated in Subsection~\ref{subsec:AC-FISTA}.
Section~\ref{sec:numerics} presents computational results demonstrating the efficiency of AC-FISTA.
Finally, Section~\ref{sec:conclusion} provides some
concluding remarks.

\section{AC-FISTA  and the main result}\label{sec:alg+cmplx}

This section consists of two subsections. The first one describes the assumptions made on the N-SCO problem \eqref{eq:PenProb2Intro} and presents the AC-FISTA method for solving it. It also presents
results describing the global convergence rate of
AC-FISTA in terms of the iteration count and some parameters
associated with AC-FISTA and the problem instance.
The second subsection discusses a special case of
AC-FISTA and the practical consequences of the above results
to this context.

\subsection{AC-FISTA and its theoretical guarantees}\label{subsec:AC-FISTA}

Throughout this paper, we consider the N-SCO problem \eqref{eq:PenProb2Intro} and make the following assumptions on it:
\begin{itemize}
	\item[(A1)] $h \in \bConv{n}$;  
	\item[(A2)] there exist scalar $L \ge 0 $ and
	a compact convex set $\Omega \supset {\cal H} := \dom h$
	such that $f$ is nonconvex and differentiable on $ \Omega $,
	and 
	\beq\label{ineq:curv}
	\|\nabla f(u)- \nabla f(u') \| \le L \|u-u'\| \qquad \forall u,u' \in \Omega.
	\eeq
\end{itemize}


We now make some remarks about the above assumptions.
First, it follows from (A1) and (A2) that
the set of optimal solutions $ X_* $ is nonempty and compact.
Second, if $L$ satisfies \eqref{ineq:curv} then the pair $(M,m)=(L,L)$ satisfies
\begin{equation}\label{ineq:upper}
	- \frac {m}2\|u-u'\|^2 \le  f(u)-\ell_f(u;u')\le\frac {M}2\|u-u'\|^2 \quad \forall u,u'\in \Omega.
\end{equation}

Throughout this paper,
$\bar L$ denotes the smallest $L$ satisfying \eqref{ineq:curv},
and
$\bar m$ (resp., $\bar M$)  denotes the smallest $m$ (resp., $M$)
satisfying the first (resp., second) inequality in \eqref{ineq:upper}.
Clearly, in view of (A2) and the second remark above, we have
$0 < \bar m\le \bar L$ and $0 \le \bar M \le \bar L$.

A necessary condition for $ y$ to be a local minimum of \eqref{eq:PenProb2Intro} is that $  y $ is a stationary
point of \eqref{eq:PenProb2Intro}, i.e.,
$0 \in \nabla f( y) + \partial h( y)$.
The goal of AC-FISTA described below is to find an approximate stationary point defined as follows.

\begin{definition}\label{def:approx pt}
	Given a tolerance $ \hat \rho>0 $, a pair $ (\hat y, \hat v) \in \R^n \times \R^n $ is called a $ \hat \rho $-approximate
	stationary point of $ \eqref{eq:PenProb2Intro} $ if
	it satisfies 
	$ \hat v \in \nabla f(\hat y) + \partial h(\hat y)$ and $ \|\hat v\|\le \hat \rho $.
\end{definition}

We are now ready to state AC-FISTA.

\noindent\rule[0.5ex]{1\columnwidth}{1pt}

AC-FISTA

\noindent\rule[0.5ex]{1\columnwidth}{1pt}

\begin{itemize}
	\item[0.] Let parameters $ \alpha,
	\gamma \in(0,1]$,
	 scalar $M$ such that $0.9 M \ge \bar M$,
	 tolerance $ \hat \rho>0 $,  and initial point $y_0 \in {\cal H}$ be given, and set $A_0 =0$, $x_0=y_0$, $M_0=\gamma M$ and $k=0$;
	\item[1.] compute
	\begin{equation}\label{eq:aktx}
	a_k = \frac{1+ \sqrt{1 + 4 M_{k}A_k}}{2M_{k}},\quad
	A_{k+1} = A_k + a_k, \quad
	\tx_k  =  \frac{A_ky_k+a_kx_k}{A_{k+1}};
	\end{equation}
	\item[2.] compute
	\begin{align}
y_{k+1}^g &=y(\tx_k;M_k), \quad C_k =  {\cal C}(y^g_{k+1};\tx_k), \label{eq:C} \\
	v_{k+1} &=  M_k(\tilde{x}_k - y^g_{k+1}) + \nabla f(y^g_{k+1}) - \nabla f(\tilde{x}_k) \label{eq:vk}
	\end{align}
	where $ y(\cdot;\cdot) $ and ${\cal C}(\cdot;\cdot)$ are as in \eqref{eq:update} and \eqref{eq:cal C}, respectively;
	if $\|v_{k+1} \| \le \hat \rho$ then output  $(\hat y,\hat v)=(y^g_{k+1},v_{k+1})$ and {\bf stop};
	\item[3.] 
	if $C_k\le 0.9M_k$,
	then compute
	\begin{equation}\label{eq:x+}
		x^g_{k+1} = P_\Omega\left( \frac{A_{k+1}}{a_k} y_{k+1}^g - \frac{A_k}{a_k} y_k \right),
	\end{equation}
	and set $ x_{k+1}=x_{k+1}^g $ and $\tilde y_{k+1}=y_{k+1}^g$;
	otherwise,  
    compute
	\begin{align}
		x_{k+1}^b & = \underset{u\in\R^n}{\mbox{argmin}} \left\{ a_k [\ell_f(u; \tilde{x}_k) + h(u)] + \frac{1}{2} \| u - x_k \|^2 \right\} \label{eq:xb},\\
		y_{k+1}^b  &=\frac{A_k y_k + a_k x_{k+1}^b}{A_{k+1}}, \label{eq:ty}
	\end{align}
	and set $ x_{k+1}=x_{k+1}^b $ and $ \tilde y_{k+1}=y_{k+1}^b $;
	\item[4.] choose $y_{k+1} \in {\cal H}$ such that $ \phi(y_{k+1}) \le \phi(\tilde y_{k+1}) $,
	compute
	\begin{align}
	M_{k+1} = \max \left\{ \gamma M\,,\, \frac{\sum_{j=0}^{k} C_j}{\alpha(k+1)} \right \} \label{eq:M},
	\end{align}
set $k \leftarrow k+1$, and go to step 1.
\end{itemize}

\noindent\rule[0.5ex]{1\columnwidth}{1pt}

The $k$-th iteration of
AC-FISTA is called good (resp., bad)
if the inequality
at the beginning of step 3,
which is identical to
\eqref{ineq:descent} with $\tau=0.9$ ,
is satisfied (resp., violated).
Moreover, for the sake of future reference,
we define the index sets for the good and bad iterations as
\begin{equation}\label{def:G}
	{\cal G} := \{k\ge0: C_k\le 0.9M_k\}, \quad {\cal B}:=\{k\ge0: C_k> 0.9M_k\},
\end{equation}
respectively.
For ACG methods that satisfy \eqref{ineq:descent} for every $k \ge 0$, it is well-known that the smaller the sequence $\{M_k\}$ is,
the faster their
practical performance is. One of the goals of
this paper is to show that this observation also holds for AC-FISTA, even though it does not satisfy \eqref{ineq:descent}
at every $k \ge 0$.
Hence, from the AC-FISTA point of view,
it is desirable to choose
$\alpha$ large, say $\alpha=0.5$,
and $\gamma$ small, say $\gamma=10^{-6}$,
since this forces $M_k$
in \eqref{eq:M}
to be small.
It turns out that this is the AC-FISTA implemented
in our benchmark of Section \ref{sec:numerics} and we refer to
it as the $(0.5,10^{-6})$-AC-FISTA.

The following paragraphs give some relevant comments about AC-FISTA.

It is shown below in Theorem \ref{thm:main}(a) that the pair $ (y_{k+1}^g, v_{k+1}) $ satisfies the inclusion in Definition \ref{def:approx pt} for every $ k\ge 0 $. Hence, if  the termination criterion $ \|v_{k+1}\|\le \hat \rho $ in step 2
is satisfied, then AC-FISTA terminates with  a $ \hat \rho $-approximate
stationary point of $ \eqref{eq:PenProb2Intro} $.
The first two identities in \eqref{eq:aktx} imply that
\begin{equation}\label{eq:rel}
	A_{k+1} = M_k a_k^2.
\end{equation}
It follows from step 0 of AC-FISTA, \eqref{eq:M}, \eqref{eq:cal C}, \eqref{ineq:upper} with $(m,M)=(\bar m, \bar M)$, and the definition of $C_k$
in \eqref{eq:C}, that
\beq \label{eq:curv-obs}
M_k \ge \gamma M, \quad C_k \in [-\bar m, \bar M], \quad  \forall k \ge 0.
\eeq

Two popular rules for choosing $y_{k+1}$
in step 4 of AC-FISTA are:
i) $y_{k+1}=\tilde y_{k+1}$ for all $k\ge 0$;
and
ii) $y_{k+1}$ such that $\phi(y_{k+1}) = \min\{ \phi(y_k), \phi(\tilde y_{k+1}) \}$ for all $k\ge 0$.
If rule (i) is chosen,
then an iteration of
AC-FISTA works as follows.
Given a pair $(x_k,y_k)$,
the $k$-th iteration
sets 
$(x_{k+1},y_{k+1})$
as being the pair
$(x_{k+1}^g,y_{k+1}^g)$ obtained in
\eqref{eq:x+} and \eqref{eq:C} if  $k \in {\cal G}$,
or the pair $(x_{k+1}^b,y_{k+1}^b)$ obtained in
\eqref{eq:xb} and \eqref{eq:ty} if $k \in {\cal B}$.
The condition on $y_{k+1}$
in step 4
simply relaxes rule (i),
and allows
AC-FISTA  to include as
special case
the monotone 
(i.e., satisfying $\phi(y_{k+1})\le \phi(y_k)$ for all $k$) variant
in which,
in place of (i),
rule (ii) is used instead.


Following the same notation of this paper, Subsection 3.1 of \cite{liang2021average} reviews three rules for performing an ACG iteration
which have roots
in works dealing with the
convex version of SCO \eqref{eq:PenProb2Intro}.
They are referred there
to as FISTA rule, AT rule, and LLM rule.
Under the assumption that
$\Omega = \R^n$,
the
two AC-FISTA iterations
(i.e., good and bad)
can be interpreted in terms
of the three rules above as follows:
a good (resp., bad) iteration of AC-FISTA
performs an ACG iteration based on the FISTA (resp., AT) rule. 
Since the test to
decide the type of
iteration (i.e., good or bad) to perform
depends on
$y_{k+1}^g$, this point
needs to be computed
prior to a bad iteration
(even though the iteration itself
does not use it).



We now comment on the computational effort of
an AC-FISTA iteration. A good iteration computes only one resolvent
evaluation of $ \partial h $,
while a bad one computes two composite
resolvent evaluations (one in \eqref{eq:C}
to compute $y_{k+1}^g$ and
another in \eqref{eq:xb} to compute $x_{k+1}^b$).
Since $\Omega$ is usually chosen so that
the projection onto $\Omega$ in \eqref{eq:x+}
is considerably cheaper than
a composite resolvent evaluation and
the majority of iterations performed by AC-FISTA is assumed to be
good ones (see Condition A below), it follows
that the average number of resolvent evaluations per iteration 
is close to 1.

We now state the main result of the paper which describes
how fast one of the iterates $y_1^g,\ldots,y_k^g$ approaches the
stationary condition $0 \in \nabla f(y)+\partial h(y)$.
Its main conclusion assumes the following condition.

\vspace*{.05in}

\noindent
{\bf Condition A:}
There exist $ k_0 \in \NN_+ $ such that $ |{\cal B}_k| \le k/3 $ for every $ k\ge k_0 $
where
\begin{equation}
 {\cal B}_k:=\{i \in {\cal B} : i\le k-1 \} \qquad
	\forall k \ge 1.  \label{def:Gk}
\end{equation}

It is worth noting that the factor $1/3$ in Condition A
is not particularly important for our analysis to hold.
Even though this factor can be replaced by
any scalar less than one, we have chosen a specific value for
it in order to keep the number of constants used in our
analysis small.

We now discuss some choices of $(\alpha,\gamma)$ which guarantee that Condition A holds.
First,
step 0 of AC-FISTA and \eqref{eq:curv-obs} imply that
$C_k \le \bar M \le 0.9M \le 0.9 M_k/\gamma$ for every $k \ge 0$.
Thus,
if $\gamma=1$ (and $\alpha \in (0,1]$ is arbitrary) then
every iteration of
AC-FISTA is good,
and
Condition A trivially holds with $k_0=0$.
Second,
it is shown in Lemma 4.5 of \cite{liang2021average} that
Condition A holds with $k_0=12$
whenever $\alpha, \gamma \in (0,1]$ are chosen so that
\begin{equation}\label{eq:alpha}
	\alpha \le \frac{0.9}{8}\left( 1+\frac{1}{0.9\gamma}\right)^{-1}.
\end{equation}
Rule \eqref{eq:alpha} for choosing
$(\alpha,\gamma)$ results in $\alpha={\cal O}(\gamma)$ so that
a small choice of $\gamma$ implies that $\alpha$ is also small.
Hence, it excludes the
practical choice
$(\alpha,\gamma)=(0.5,10^{-6})$
mentioned in the paragraph following
AC-FISTA.
However, since the proof of Lemma 4.5 of \cite{liang2021average}
is based on quite conservative bounds,
Subsection \ref{subsec:practice} below
reexamines its proof
and gives
strong evidence
(validated by our computational results)
that Condition A holds
for our practical choice of $(\alpha,\gamma)=(0.5,10^{-6})$.

Instead of specifying
values for
$(\alpha,\gamma)$, the two main results below simply assume that
Condition A holds, 
and establishes
a global convergence
rate and iteration-complexity for AC-FISTA.




\begin{theorem}\label{thm:main}
	Define the harmonic mean of the sequence $ \{M_i\} $
	and the average of the curvature
	sequence $ \{ {\cal L}(y^g_{i+1}; \tx_i ) \} $ defined in
	\eqref{eq:tC} as
	\begin{equation}
		M_k^{hm}:=\frac{k}{\sum_{i=0}^{k-1} \frac{1}{M_i}}, \quad
		L_k^{avg}:= \frac{1}{k}\sum_{i=0}^{k-1} {\cal L}(y^g_{i+1}; \tx_i) \label{eq:L},
	\end{equation}
	respectively, and let
	\begin{equation}\label{def:theta}
		\theta_{k}:= \frac{M_k}{M_k^{hm}}, \quad \tau_k:= \frac{L_k^{avg}}{M_k}.
	\end{equation}
	Then, the following statements hold:
	\begin{itemize}
		\item[(a)]
		for every $k \ge 1 $, we have
		$v_k \in \nabla f(y_{k}^g)+\partial h(y_{k}^g)$;
		\item[(b)] if Condition A holds, then for every $ k\ge \max\{12,k_0\} $,
		\begin{equation}\label{eq:bound}
			\min_{1 \le i \le k} \|v_i\| = {\cal O}\left( \left( 1 +  \tau_k \right) \left[ \frac{M_k d_0}{k^{3/2}} + \left( \sqrt{\bar M} + \sqrt{\bar m } \right) \frac{\sqrt{M_k \theta_k} D_\Omega}{k} + \frac{\sqrt{\bar m M_k \theta_k} D_{\cal H} }{\sqrt{k}} \right]\right)
		\end{equation}
		where $d_0$ denotes the distance of the initial point $x_0$ to the set of optimal solutions of \eqref{eq:PenProb2Intro}, and $ D_\Omega $ and $ D_{\cal H} $ denote the diameters of $ \Omega $ and $ {\cal H} $, respectively;
    \item[(c)] for every $k\ge 1$, we have
    \begin{equation}\label{ineq:tau}
		    M_k= {\cal O}\left(\frac{M}{\alpha}\right), \quad \frac{k-1}{2k} \le \theta_k \le \frac{M_k}{\gamma M}, \quad \tau_k \le \frac{\bar L}{\gamma M};
		\end{equation}
  as a consequence, $\theta_k={\cal O}( 1/(\alpha \gamma))$.
	\end{itemize}
\end{theorem}

	
	
	 The corollary below describes the
	worst-case behavior of AC-FISTA under the assumption that Condition A holds.

%
%
%
%
%


\begin{corollary}\label{cor:bound}
If Condition A holds, then for every $ k\ge \max\{12,k_0\} $,
	\begin{equation}\label{eq:bound2}
			\min_{1 \le i \le k} \|v_i\| = {\cal O}\left( \left( 1 +  \frac{\bar L}{\gamma M} \right) \left[ \frac{M d_0}{\alpha k^{3/2}} + \left( \sqrt{\bar M} + \sqrt{\bar m }\right) \frac{\sqrt{M} D_\Omega}{\alpha\sqrt{\gamma} k} + \frac{\sqrt{\bar m M} D_{\cal H} }{\alpha \sqrt{\gamma} \sqrt{k}} \right]\right).
		\end{equation}
  As a consequence, the iteration-complexity to find a $\hat \rho$-approximate stationary point of \eqref{eq:PenProb2Intro} is
  \[
  {\cal O}\left( \left( 1 +  \frac{\bar L}{\gamma M} \right)^{2/3} \left(\frac{M d_0}{\alpha \hat \rho}\right)^{2/3} + \left( 1 +  \frac{\bar L}{\gamma M} \right) \frac{( \sqrt{\bar M} + \sqrt{\bar m })\sqrt{M}D_\Omega}{\alpha\sqrt{\gamma} \hat \rho} + \left( 1 +  \frac{\bar L}{\gamma M} \right)^2 \frac{\bar m M D_{\cal H}^2}{\alpha^2\gamma\hat \rho^2}\right).
  \]
\end{corollary}

\begin{proof}
The first conclusion immediately follows from Theorem \ref{thm:main}(b) and (c).  The second one follows from \eqref{eq:bound2} and Definition \ref{def:approx pt}.
\end{proof}



We finally make a comment about the
dependence of the convergence rate bound \eqref{eq:bound2} in terms of $ \alpha $ and $\gamma$ only.
If $M$ is chosen so as to satisfy
$M \ge \bar L/0.9$, then the
bottleneck term in the worst-case convergence rate bound \eqref{eq:bound2} becomes
${\cal O}(\sqrt{\bar m M} D_{\cal H}/(\alpha \gamma^{3/2}\sqrt{k}))$,
which is equal to $1/(\alpha \gamma^{3/2})$ times
the convergence rate bounds of some other ACG variants
derived in the literature
(see e.g.,\ \cite{nonconv_lan16,liang2019fista}).
Hence, the above two convergence rate bounds
are similar whenever $\alpha$ and
$\gamma$ are both close to one.



\subsection{A practical AC-FISTA}\label{subsec:practice}

This subsection discusses in more details the AC-FISTA
with $(\alpha,\gamma)=(0.5,10^{-6})$,
i.e., the
$(0.5,10^{-6})$-AC-FISTA
as defined in the paragraph immediately after AC-FISTA.


Although the dependence of the dominant term in \eqref{eq:bound2}
with respect to $\alpha$ and $\gamma$, i.e., ${\cal O}(1/(\alpha\gamma^{3/2}))$,
is large for $(0.5,10^{-6})$-AC-FISTA, it should be noted that
this dependence factor was obtained using the conservative estimates
in 
\eqref{ineq:tau}. In practice, the quantity
$\theta_k$, although sometimes
		initially large,
		quickly approaches one and stays close to
		one thereafter, and $\tau_k$
		never exceeds $4$ and, in many cases,
		is within the interval $[0,3]$ 
		(see Tables \ref{tab:SVM}, \ref{tab:QP1}, \ref{tab:QP2}, \ref{tab:MC1} and \ref{tab:MC2}).
		We can then conclude that, in terms of
		$\gamma$ and $\alpha$,
		both $\theta_k$ and $\tau_k$ are ${\cal O}(1)$, instead of ${\cal O}(1/(\alpha\gamma))$ and ${\cal O}(1/\gamma)$
		as in Theorem \ref{thm:main}(c).
		Second, under the (often observed)
		condition that $\theta_k$ and $\tau_k$ are ${\cal O}(1)$, the convergence rate bound
		reduces to
\[
\min_{1 \le i \le k} \|v_i\| = {\cal O}\left(  \frac{M_k d_0}{k^{3/2}} + \left( \sqrt{\bar M} + \sqrt{\bar m} \right) \frac{\sqrt{M_k} D_\Omega}{k} + \frac{\sqrt{\bar m M_k} D_{\cal H} }{\sqrt{k}} \right),
\]
and hence does not depend on $\gamma^{-1}=10^6$.
Third, the latter bound clearly shows that
the practical behavior of this $(0.5,10^{-6})$-AC-FISTA improves
as the ratio  $M_k/M$ becomes small, which is
what has been observed in our computational
experiments.

We will now argue that in practice  the
$(0.5,10^{-6})$-AC-FISTA is likely to
satisfy Condition A.
Indeed, letting
\[
 \eta_k:=\frac{\sum_{i \in {\cal B}_k } C_i }
 {\sum_{i \in {\cal B}_k, \, i \le |{\cal B}_k|/2 } C_i }
 \]
 where ${\cal B}_k$ is as in \eqref{def:Gk},
 and examining the proof of Lemma 4.5 of \cite{liang2021average}, it can be easily seen that
 \[
 k \alpha  \eta_k \ge  \frac{0.9|{\cal B}_k|}2.
 \]
 Lemma 4.5 of \cite{liang2021average} then uses
 \eqref{eq:M}, and the facts that $ M \ge \bar M \ge C_i$ and $C_i> 0.9M_i$ for $i\in {\cal B}_k$, to conclude that
 $\eta_k$ is bounded by $1+1/(0.9\gamma)$.
 However, such bound on $\eta_k$ is quite conservative and
it is observed computationally that
 $\eta_k$ quickly approaches two and remains close to two
 thereafter.
 Hence, it follows from the latter observation and
 the above inequality that
 any choice of  $\alpha$ in $(0,1/8]$ makes Condition A
 very likely to hold in practice.
 Although our choice of $\alpha=0.5$ does not lie in this
 (still conservative) range, we have observed that it works quite
 well in our computational experiments.

\subsection{Comparison with the AC-ACG method of \cite{liang2021average}}

We start by giving an overview of 
the AC-ACG method of \cite{liang2021average}
using the description of
AC-FISTA in Subsection \ref{subsec:AC-FISTA}.
More specifically,
AC-ACG is similar to
the variant of AC-FISTA
where $y_{k+1}=\ty_{k+1}$
for every $k \ge 0$ 
but differs in the following
two aspects:
\begin{itemize}
    \item
    it sets
    $x_{k+1}$ to  the right-hand side of \eqref{eq:xb}
    regardless of whether the iteration is good or bad;
    \item
    it computes $M_{k+1}$ as
    in \eqref{eq:M} but with $C_j$ replaced by $\tilde {\cal C}(y^g_{j+1};\tx_j)$ where $\tilde C(\cdot;\cdot)$ is as in \eqref{eq:tC}.
\end{itemize}
Another not-so-crucial difference is that ACG chooses
the parameters $\alpha,\gamma \in (0,1]$ so that
\eqref{eq:alpha} holds as equality
while AC-FISTA 
discards the latter condition
on $\alpha$ and $\gamma$ and
instead
simply makes the weaker assumption that
Condition A holds
(see the third remark in the paragraph containing \eqref{eq:alpha}).

In terms of the three ACG rules described in Subsection 3.1 of \cite{liang2021average}, a good (resp., bad) iteration of AC-ACG performs an ACG iteration based on the LLM (resp., AT) rule.
Hence, while a good iteration
AC-ACG uses the
LLM rule, the one
for AC-FISTA uses the
FISTA rule.





From a computational point of view, while AC-ACG always performs two resolvent
evaluations at every iteration, AC-FISTA performs
one resolvent evaluation in a good iteration
and two resolvent evaluations in a bad one.
Since in practice most of
the iterations of AC-FISTA are good,
its average cost per iteration
is relatively lower
than that of
AC-ACG.




\section{Proof of Theorem \ref{thm:main}}\label{sec:proof}

 We start by providing a straightforward technical result
 which is then used 
 to outline our analysis in this section.

\begin{lemma}\label{lem:vk}
    For every $k \ge 1 $, we have
		$v_k \in \nabla f(y_{k}^g)+\partial h(y_{k}^g)$.
\end{lemma}
\begin{proof}
    The inclusion follows from the optimality condition of \eqref{eq:update}, and the definitions of $ y_{k+1}^g $ and $ v_{k+1} $ in \eqref{eq:C} and \eqref{eq:vk}, respectively.
\end{proof}

The most technical and difficult part of Theorem \ref{thm:main}
is its statement (b) where a convergence rate bound on $\min_{1\le i \le k}\|v_i\|$ is claimed.
A rough outline of the proof of this statement is as follows.
First, Lemmas \ref{lem:gamma}-\ref{lem:sum} are used to prove that
\begin{equation}\label{eq:bnd1}
    \sum_{i\in {\cal G}_k}\left( A_{i+1}M_i\|y_{i+1}^g-\tx_i\|^2\right) 
		= {\cal O}\left( d_0^2 +  \frac{\bar m + \bar M}{M_k^{hm}}  D_\Omega^2k + \frac{\bar m}{M_k^{hm}} D_{\cal H} ^2 k^2 \right)
\end{equation}
where ${\cal G}_k=\{0,\ldots,k-1\}\setminus {\cal B}_k$.
Next, using Condition A and some nontrivial technical results,
namely, Lemmas \ref{lem:Gk}-\ref{lem:new}, it is shown within the proof of Theorem \ref{thm:main}(b) that
\begin{equation}\label{ineq:minv}
    \min_{1 \le i \le k} \|v_i\| 
		= {\cal O}\left(\frac{M_k + L_k^{avg}}{k^{3/2}}  \left( \sum_{ i \in {\cal G}_k} A_{i+1}M_i\|y_{i+1}^g-\tx_i\|^2\right)^{1/2}\right).
\end{equation}
A direct combination of the above two claims
then immediately gives us
Theorem \ref{thm:main}(b).
The proof of Theorem \ref{thm:main}(c) does not require any technical result.

The first lemma below states
a few basic properties of AC-FISTA.

%
%
%
%
%
%
%

\begin{lemma}\label{lem:gamma}
	For every $k \ge 0$, we define
	\begin{align}
		& \tilde{\gamma}_k(u) :=  \ell_f(u; \tilde{x}_k) + h(u), \label{def1} \\
		& \gamma_k(u) := \tilde{\gamma}_k(y_{k+1}^g) + M_k\langle \tilde{x}_k - y_{k+1}^g, u - y_{k+1}^g \rangle.  \label{def2}
	\end{align}
	Then the following statements hold for every $k \ge 0$:
	\begin{itemize}
		\item[(a)] $\gamma_k$ minorizes $\tilde{\gamma}_k$, $\tilde{\gamma}_k(y_{k+1}^g) = \gamma_k(y_{k+1}^g)$,
		\[
			\min_u \left\{ \tilde{\gamma}_k(u) + \frac{M_k}{2}\| u - \tilde{x}_k \|^2 \right\}  = 
			\min_u \left\{ \gamma_k(u) + \frac{M_k}{2}\| u - \tilde{x}_k \|^2 \right\},
		\]
		and these minimization problems have $y_{k+1}^g$ as 
		unique optimal solution; 
		\item[(b)] for every $u  \in {\cal H} $, $ \tilde{\gamma}_k(u) -  \phi(u) \le \bar m  \| u - \tilde{x}_k \|^2/2; $
		\item[(c)]
		$
		x_{k+1}^g = {\mbox{argmin}} \left\{ a_k \gamma_k(u) + \| u - x_k \|^2/2: u\in\Omega \right\};
		$
		\item[(d)]
		$ \{x_{k}^b\} $, $\{y_k\}$, $\{y_{k}^g\}$ and $\{\ty_{k}\}$ are contained in $ {\cal H} $, while $\{x_k^g\}$, $\{x_k\}$ and $\{\tx_k\}$ lie
		in $\Omega$;
		\item[(e)] for every $u \in {\cal H} $, we have 
		\[
		A_k \| y_k - \tilde{x}_k \|^2 + a_k \| u - \tilde{x}_k\|^2 \le \frac{1}{M_k} D_\Omega^2 + a_k  D_{\cal H}^2.
		\]
	\end{itemize}
\end{lemma}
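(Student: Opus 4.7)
The plan for part (a) is to invoke the first-order optimality condition for the proximal subproblem defining $y_{k+1}^g = y(\tilde x_k;M_k)$, which produces a subgradient $s:=M_k(\tilde x_k - y_{k+1}^g)-\nabla f(\tilde x_k)\in\partial h(y_{k+1}^g)$. Plugging $s$ into the convexity inequality $h(u)\ge h(y_{k+1}^g)+\langle s,u-y_{k+1}^g\rangle$ and adding $\ell_f(u;\tilde x_k)$ to both sides, the $\nabla f(\tilde x_k)$ affine terms telescope and one lands precisely at $\tilde\gamma_k(u)\ge\gamma_k(u)$, with equality at $u=y_{k+1}^g$. Since the quadratic regularizer $(M_k/2)\|\cdot-\tilde x_k\|^2$ appears identically on both sides, the two minimization values coincide; strong convexity of $\gamma_k(\cdot)+(M_k/2)\|\cdot-\tilde x_k\|^2$ and a one-line check of its first-order condition identify $y_{k+1}^g$ as the unique minimizer.

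Part (b) is immediate from the lower-curvature bound in \eqref{ineq:upper} with $m=\bar m$, added to $h(u)$ on both sides, noting that $u,\tilde x_k\in\Omega$ by part (d). Part (c) is a direct calculation: the first-order condition for the strongly convex objective gives the unconstrained minimizer $x_k-a_kM_k(\tilde x_k-y_{k+1}^g)$; substituting the identity $x_k=(A_{k+1}\tilde x_k-A_k y_k)/a_k$ read off from \eqref{eq:aktx} and using $A_{k+1}=M_k a_k^2$ from \eqref{eq:rel}, the coefficient of $\tilde x_k$ vanishes, leaving exactly the argument of $P_\Omega$ in \eqref{eq:x+}. Part (d) is a short induction: $y_0\in {\cal H}$ by initialization; $\tilde x_k\in\Omega$ as a convex combination of $y_k\in {\cal H}\subset\Omega$ and $x_k\in\Omega$; $y_{k+1}^g$ and $x_{k+1}^b$ lie in $\dom h={\cal H}$ because $h$ appears in their defining objectives; $\tilde y_{k+1}$ is either $y_{k+1}^g\in {\cal H}$ or a convex combination of $y_k,x_{k+1}^b\in {\cal H}$; and $x_{k+1}^g=P_\Omega(\cdot)\in\Omega$.

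The main obstacle is part (e). The key observation from \eqref{eq:aktx} is the identity $y_k-\tilde x_k=(a_k/A_{k+1})(y_k-x_k)$. Expanding
\[
\|u-\tilde x_k\|^2=\|u-y_k\|^2+2\langle u-y_k,\,y_k-\tilde x_k\rangle+\|y_k-\tilde x_k\|^2,
\]
multiplying by $a_k$, and adding $A_k\|y_k-\tilde x_k\|^2$ so that the two squared-distance terms combine with coefficient $A_{k+1}=M_ka_k^2$, the inner product collapses via $2\langle u-y_k,\,y_k-x_k\rangle+\|y_k-x_k\|^2=\|u-x_k\|^2-\|u-y_k\|^2$, producing the clean equality
\[
A_k\|y_k-\tilde x_k\|^2+a_k\|u-\tilde x_k\|^2=\left(a_k-\tfrac{1}{M_k}\right)\|u-y_k\|^2+\tfrac{1}{M_k}\|u-x_k\|^2.
\]
Since $a_k=(1+\sqrt{1+4M_kA_k})/(2M_k)\ge 1/M_k$, the coefficient $a_k-1/M_k$ is nonnegative, so bounding $\|u-y_k\|\le D_{\cal H}$ (both in ${\cal H}$ by part (d)) and $\|u-x_k\|\le D_\Omega$ (both in $\Omega$ by part (d)) delivers the stated inequality. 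The subtlety is that one must \emph{not} bound $\|u-\tilde x_k\|^2$ in isolation by Jensen: doing so wastes a factor of two on $D_\Omega^2/M_k$. The cancellation that produces the sharp bound only surfaces by treating the two squared-distance terms simultaneously and completing the square.
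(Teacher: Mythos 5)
Your proposal is correct and follows essentially the same route as the paper: parts (b)--(d) match the paper's arguments, and your expansion in (e) reproduces exactly the paper's key identity, since by \eqref{eq:rel} your coefficients $1/M_k$ and $a_k-1/M_k$ are precisely the paper's $a_k^2/A_{k+1}$ and $A_k a_k/A_{k+1}$ in
\[
A_k\|y_k-\tx_k\|^2 + a_k\|u-\tx_k\|^2 = \frac{a_k^2}{A_{k+1}}\|u-x_k\|^2 + \frac{A_k a_k}{A_{k+1}}\|u-y_k\|^2 .
\]
The only departure is in part (a), where you derive the minorization directly from the optimality condition of \eqref{eq:update} and convexity of $h$ rather than citing Lemma 2.2(a) of \cite{liang2019fista}; this direct argument is a valid, self-contained substitute for the citation.
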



\begin{proof}
	(a) This statement follows from Lemma 2.2(a) of \cite{liang2019fista} with $ (\kappa_0, \lam, y_{k+1})=(0, 1/M_k, y_{k+1}^g) $.
	
	(b) This statement immediately follows from the first inequality in \eqref{ineq:upper} and the definition of $\tilde{\gamma}_k(u)$ in (\ref{def1}). 
	
	(c) It follows from the definitions of $ \tx_k $ and $\gamma_k$ in \eqref{eq:aktx} and \eqref{def2}, respectively, and relation \eqref{eq:rel} that the (unique) global minimizer of the function $ a_k \gamma_k(u) +  \| u - x_k \|^2/2$ over $\R^n$ is
    \[
    x_k + a_kM_k(y_{k+1}^g-\tilde x_k)=x_k + \frac{A_{k+1}}{a_k} \left(y_{k+1}^g - \frac{A_k y_k +a_k x_k}{A_{k+1}}\right)=\frac{A_{k+1}}{a_k} y_{k+1}^g - \frac{A_k}{a_k} y_k.
    \]
    This observation and the definition of $x_{k+1}^g$ in \eqref{eq:x+} then imply that the conclusion of (c) holds.

	(d) First, it is by definition that $ \{y_k\} $ is contained $ {\cal H} $. In view of \eqref{eq:update} (resp., \eqref{eq:xb}), it is clear that the sequence $ \{y_{k}^g\} $ (resp., $ \{x_{k}^b\}$) is contained in $ {\cal H}  $.
	Hence, using the fact that $ y_0\in {\cal H} $ (see step 0 of AC-FISTA), \eqref{eq:ty} and the convexity of ${\cal H} $, we easily see that $\{y_{k}^b\} \subset {\cal H} $ and hence $\{\ty_{k}\} \subset {\cal H} $. 
	It is also easy to see that $ \{x_k^g\} \subset \Omega $ from its definition in \eqref{eq:x+}. Hence, it follows from the fact that $\{x_k^b\} \subset {\cal H} \subset \Omega$ and step 0 of AC-FISTA that $\{x_k\}$ lie in $ \Omega $.
	Finally, $\{\tx_k\} \subset \Omega $ follows from the third identity in \eqref{eq:aktx} and the convexity of $\Omega$.
	
	(e) It is easy to see that for every $ x, y\in \R^n $ and $ a, A \in \R_+ $, 
	\[
	A\|y\|^2+a\|x\|^2=(A+a)\left\| \frac{Ay+ax}{A+a}\right\|^2 + \frac{Aa}{A+a}\|y-x\|^2.
	\]
	Using the above identity with $ x = u-\tx_k $, $ y=y_k-\tx_k $, $ a=a_k $ and $ A=A_k $, and the second and third identities in \eqref{eq:aktx}, we have
	\[
	A_k \| y_k - \tilde{x}_k \|^2 + a_k \| u - \tilde{x}_k\|^2  
		=  \frac{a_k^2}{A_{k+1}} \|u-x_k\|^2 + \frac{A_ka_k}{A_{k+1}}\|u-y_k\|^2.
	\]
	This statement now follows from the above inequality, statement (d), the definitions of $ D_\Omega $ and $ D_{\cal H} $, and relation \eqref{eq:rel}.
\end{proof}

The next result introduces a crucial potential function and
provides an important recursive formula based on it.


\begin{lemma}\label{lem:tech2}
	For every $ u\in{\cal H}  $ and $ k\ge 0 $, we have
		\begin{align}
		\frac{ M_k-F_k}{2}A_{k+1}\|\ty_{k+1}-\tx_k\|^2
		\le \eta_k(u)- \eta_{k+1}(u)+\frac{\bar m}{2} \left( \frac{1}{M_k} D_\Omega^2 + a_k  D_{\cal H} ^2\right) \label{ineq:good} 
		\end{align}
	where $M_k$ is as in \eqref{eq:M}, $ F_k:= {\cal C}(\ty_{k+1};\tx_k) $ and
	\begin{equation} \label{eq:eta}
	\eta_k(u):=A_k[\phi(y_k)-\phi(u)]+\frac12\|u-x_k\|^2.
	\end{equation}
\end{lemma}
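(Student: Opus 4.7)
The plan is to pass through a single master inequality valid in both the good and the bad cases, namely
\[
A_{k+1}\tilde\gamma_k(\ty_{k+1}) + \frac{M_k A_{k+1}}{2}\|\ty_{k+1}-\tx_k\|^2 \le A_k\tilde\gamma_k(y_k) + a_k\tilde\gamma_k(u) + \tfrac12\|u-x_k\|^2 - \tfrac12\|u-x_{k+1}\|^2,
\]
and then deduce \eqref{ineq:good} from it. To do so, I will rewrite the LHS as $A_{k+1}\phi(\ty_{k+1}) - \tfrac{(F_k-M_k)A_{k+1}}{2}\|\ty_{k+1}-\tx_k\|^2$ using the definition of $F_k$ together with $\phi(y_{k+1}) \le \phi(\ty_{k+1})$ (step~4 of AC-FISTA), convert $A_k\tilde\gamma_k(y_k) + a_k\tilde\gamma_k(u)$ into $A_k\phi(y_k) + a_k\phi(u)$ via Lemma~\ref{lem:gamma}(b) (bounding the two $\tfrac{\bar m}{2}\|\cdot-\tx_k\|^2$ residues jointly by Lemma~\ref{lem:gamma}(e)), and regroup the $\phi$ contributions using $a_k = A_{k+1} - A_k$ to recognize $\eta_k(u) - \eta_{k+1}(u)$.

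For $k\in{\cal G}$, we have $\ty_{k+1}=y_{k+1}^g$. Lemma~\ref{lem:gamma}(a) supplies $M_k$-strong convexity of $\tilde\gamma_k(\cdot)+\tfrac{M_k}{2}\|\cdot-\tx_k\|^2$ around its minimizer $y_{k+1}^g$; writing this inequality at $u=y_k$ weighted by $A_k$ and at a generic $u\in{\cal H}$ weighted by $a_k$, then summing, yields weighted quadratic terms in $\tx_k$ and in $y_{k+1}^g$. Applying the parallelogram identity from the proof of Lemma~\ref{lem:gamma}(e) once to the $\|\cdot-\tx_k\|^2$ quadratics and once to the $\|\cdot-y_{k+1}^g\|^2$ quadratics (both times using $A_{k+1}=M_k a_k^2$ from \eqref{eq:rel}), the cross-term $\tfrac{M_k A_k a_k}{2A_{k+1}}\|y_k-u\|^2$ cancels between the two sides. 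What survives of the $y_{k+1}^g$ quadratic equals $\tfrac12\|u - (a_k M_k y_{k+1}^g - (A_k/a_k)y_k)\|^2$, which by nonexpansiveness of $P_\Omega$ and the formula \eqref{eq:x+} dominates $\tfrac12\|u-x_{k+1}^g\|^2=\tfrac12\|u-x_{k+1}\|^2$; this gives the master inequality in the good case.

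For $k\in{\cal B}$ we have $\ty_{k+1}=(A_k y_k + a_k x_{k+1}^b)/A_{k+1}$. Formula \eqref{eq:xb} identifies $x_{k+1}^b$ as the minimizer of the $1$-strongly convex function $u \mapsto a_k\tilde\gamma_k(u) + \tfrac12\|u-x_k\|^2$, so
\[
a_k\tilde\gamma_k(x_{k+1}^b) + \tfrac12\|x_{k+1}^b-x_k\|^2 + \tfrac12\|u-x_{k+1}^b\|^2 \le a_k\tilde\gamma_k(u) + \tfrac12\|u-x_k\|^2.
\]
Convexity of $\tilde\gamma_k$ and the formula \eqref{eq:ty} give $A_{k+1}\tilde\gamma_k(\ty_{k+1}) \le A_k\tilde\gamma_k(y_k) + a_k\tilde\gamma_k(x_{k+1}^b)$, while the same formula together with \eqref{eq:aktx} yields $\ty_{k+1}-\tx_k = (a_k/A_{k+1})(x_{k+1}^b - x_k)$, so that $\tfrac{M_k A_{k+1}}{2}\|\ty_{k+1}-\tx_k\|^2 = \tfrac12\|x_{k+1}^b-x_k\|^2$ by \eqref{eq:rel}. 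Combining these three facts, the two $\tfrac12\|x_{k+1}^b-x_k\|^2$ contributions cancel, producing the master inequality in the bad case.

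The main technical obstacle will be the good-case bookkeeping: the parallelogram identity has to be applied twice in parallel, and it is precisely the FISTA-type relation $A_{k+1}=M_k a_k^2$ that makes the surviving squared distance coincide algebraically with the squared distance from $u$ to the pre-projection point in \eqref{eq:x+}, enabling a single nonexpansiveness step to close the estimate. The remaining steps are routine applications of Lemma~\ref{lem:gamma}(b) and (e) together with the definition of $F_k$ in terms of ${\cal C}$.
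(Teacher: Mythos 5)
Your proposal is correct, and your reduction step (Lemma~\ref{lem:gamma}(b),(e), the identity $\tilde\gamma_k(\ty_{k+1})=\phi(\ty_{k+1})-\tfrac{F_k}{2}\|\ty_{k+1}-\tx_k\|^2$, monotonicity $\phi(y_{k+1})\le\phi(\ty_{k+1})$, and $a_k=A_{k+1}-A_k$) as well as your bad-case argument coincide with the paper's. Where you genuinely diverge is the good-case derivation of the master inequality: the paper never works with $\tilde\gamma_k$ there, but instead passes to the affine minorant $\gamma_k$ of \eqref{def2}, invokes Lemma~\ref{lem:gamma}(c) to recognize $x_{k+1}^g$ as the $\Omega$-constrained minimizer of $a_k\gamma_k+\tfrac12\|\cdot-x_k\|^2$, applies strong convexity of that subproblem at $u$, uses affinity of $\gamma_k$ at the auxiliary point $\hat y_{k+1}=(A_ky_k+a_kx_{k+1})/A_{k+1}$, and only then calls on the minimality of $y_{k+1}^g$ for the surrogate problem together with $\gamma_k(y_{k+1}^g)=\tilde\gamma_k(y_{k+1}^g)$. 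You instead apply the $M_k$-strong-convexity inequality of $\tilde\gamma_k+\tfrac{M_k}{2}\|\cdot-\tx_k\|^2$ at its unconstrained minimizer $y_{k+1}^g$ directly at the two points $y_k$ and $u$ with weights $A_k$ and $a_k$, run the weighted-average identity twice so that the $\|u-y_k\|^2$ cross-terms cancel and \eqref{eq:rel} turns the surviving quadratic into $\tfrac12\|u-(a_kM_ky_{k+1}^g-\tfrac{A_k}{a_k}y_k)\|^2$, and close with $\|u-P_\Omega(z)\|\le\|u-z\|$ for $u\in{\cal H}\subset\Omega$; this bypasses $\gamma_k$ and Lemma~\ref{lem:gamma}(a),(c) altogether. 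Your route is more self-contained and makes explicit why the FISTA relation $A_{k+1}=M_ka_k^2$ matches the pre-projection point of \eqref{eq:x+}; the paper's route reuses the estimate-sequence machinery imported from \cite{liang2019fista} and keeps the good and bad cases formally parallel, each being a strong-convexity argument for the subproblem that actually defines $x_{k+1}$. Both are valid proofs of \eqref{ineq:good}.
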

\begin{proof}
	We first note that in order to prove the lemma, it suffices to show 
	\begin{equation}\label{ineq:goal}
		\frac{ M_k-F_k}{2}A_{k+1}\|\ty_{k+1}-\tx_k\|^2-\eta_k(u)+ \eta_{k+1}(u)
	\le A_k(\tilde \gamma_k(y_k)-\phi(y_k))+a_k(\tilde \gamma_k(u)-\phi(u)).
	\end{equation}
	Indeed, it follows from the above inequality and Lemma \ref{lem:gamma}(b) that 
	\[
		\frac{ M_k-F_k}{2}A_{k+1}\|\ty_{k+1}-\tx_k\|^2-\eta_k(u)+ \eta_{k+1}(u)
		\le  \frac{\bar m}{2}\left( A_k \| y_k - \tx_k \|^2 + a_k\| u - \tx_k\|^2\right),
	\]
	which, together with Lemma \ref{lem:gamma}(e), then immediately implies \eqref{ineq:good}.

	We now prove \eqref{ineq:goal} holds for $ k \in {\cal G} $.	
	Let $k \in {\cal G}$ and $u \in {\cal H}  $ be given.
	Noting that $ x_{k+1}=x_{k+1}^g $,
	and using Lemma \ref{lem:gamma}(c), relations \eqref{eq:aktx} and \eqref{eq:rel}, and the fact that $a_k\gamma_k + \|\cdot-x_k\|^2/2$ is $1$-strongly convex, we conclude that
	\begin{align*}
	A_k\gamma_k(y_k)&+a_k\gamma_k(u)+\frac{1}{2}\|u-x_k\|^2-\frac{1}{2}\|u-x_{k+1}\|^2 
	\ge A_k\gamma_k(y_k)+a_k\gamma_k(x_{k+1})+\frac{1}{2}\|x_{k+1}-x_k\|^2 \nn \\
	&\ge A_{k+1}\gamma_k(\hat y_{k+1})+\frac{1}{2}\frac{A_{k+1}^2}{a_k^2}\|\hat y_{k+1}-\tx_k\|^2 
	= A_{k+1}\left[ \gamma_k(\hat y_{k+1})+\frac{ M_k}{2}\|\hat y_{k+1}-\tx_k\|^2\right]
	\end{align*}
	where $ \hat y_{k+1} = (A_ky_k+a_kx_{k+1})/A_{k+1} $.
	It follows from Lemma \ref{lem:gamma}(a) and the fact that $ \ty_{k+1}= y_{k+1}^g $ for every $ k\in {\cal G} $ that
\begin{align*}
	\gamma_k(\hat y_{k+1})&+\frac{ M_k}{2}\|\hat y_{k+1}-\tx_k\|^2
	\ge \gamma_k(\ty_{k+1})+\frac{ M_k}{2}\|\ty_{k+1}-\tx_k\|^2\\
	&= \tilde \gamma_k(\ty_{k+1})+\frac{ M_k}{2}\|\ty_{k+1}-\tx_k\|^2
	 = \phi(\ty_{k+1})+\frac{ M_k-F_k}{2}\|\ty_{k+1}-\tx_k\|^2\\
& \ge \phi(y_{k+1})+\frac{ M_k-F_k}{2}\|\ty_{k+1}-\tx_k\|^2.
\end{align*}
where the last identity is due to the definitions of $ F_k $ and $ \tilde \gamma_k $ in \eqref{def1}, and the last inequality is due to the fact that $ \phi(y_{k+1}) \le \phi(\ty_{k+1}) $.
Using the above two inequalities and the definition of $ \eta_k $ in \eqref{eq:eta}, we have
\[
\frac{ M_k-F_k}{2}A_{k+1}\|\ty_{k+1}-\tx_k\|^2-\eta_k(u)+ \eta_{k+1}(u)
\le A_k[\gamma_k(y_k)-\phi(y_k)]+a_k[\gamma_k(u)-\phi(u)],
\]
which together with the fact that $ \gamma_k \le \tilde \gamma_k $ (see Lemma \ref{lem:gamma}(a)) implies that \eqref{ineq:goal} holds.

We finally prove \eqref{ineq:goal} holds for $ k\in {\cal B} $. Let $k \in {\cal B}$ and $u \in {\cal H} $ be given. 
Noting that $ x_{k+1}=x_{k+1}^b $ and $ \ty_{k+1}=y_{k+1}^b $,
and using the definitions of $ \tilde \gamma_k $, $ x_{k+1}^b $, $ \ty_{k+1} $ and $ \tx_k $ in \eqref{def1}, \eqref{eq:xb}, \eqref{eq:ty} and \eqref{eq:aktx}, respectively, the fact that $a_k \tilde \gamma_k + \|\cdot-x_k\|^2/2$ is $1$-strongly convex, and relation \eqref{eq:rel}, we conclude that
\begin{align*}
A_k \tilde \gamma_k(y_k)&+a_k \tilde \gamma_k(u)+\frac{1}{2}\|u-x_k\|^2-\frac{1}{2}\|u-x_{k+1}\|^2 
\ge A_k \tilde \gamma_k(y_k)+a_k \tilde \gamma_k(x_{k+1}^b)+\frac{1}{2}\|x_{k+1}^b-x_k\|^2  \\
&\ge A_{k+1} \tilde \gamma_k(y_{k+1}^b)+\frac{1}{2}\frac{A_{k+1}^2}{a_k^2}\|y_{k+1}^b-\tx_k\|^2 
= A_{k+1}\left[ \tilde \gamma_k(\ty_{k+1})+\frac{ M_k}{2}\|\ty_{k+1}-\tx_k\|^2\right]\\
&= A_{k+1}\left[ \phi(\ty_{k+1})+\frac{ M_k-F_k}{2}\|\ty_{k+1}-\tx_k\|^2 \right] 
 \ge A_{k+1}\left[ \phi(y_{k+1})+\frac{ M_k-F_k}{2}\|\ty_{k+1}-\tx_k\|^2 \right] 
\end{align*}
where the last identity is due to the definitions of $ F_k $ and $ \tilde \gamma_k $ in \eqref{def1}, and the last inequality is due to the fact that $ \phi(y_{k+1}) \le \phi(\ty_{k+1}) $.
	Using the above inequality and the definition of $ \eta_k $ in \eqref{eq:eta}, and rearranging the terms, we obtain \eqref{ineq:goal}.
\end{proof}

The following result discusses the consequences of Lemma \ref{lem:tech2}
when $k$ is a good iteration and also when $k$ is a bad one.

\begin{lemma}\label{lem:basic}
	The following statements hold for every $ u\in{\cal H}  $:
	\begin{itemize}
		\item[(a)] if $k \in {\cal G}$ then
		\begin{equation}\label{ineq:case1}
		\frac{1}{20} A_{k+1}M_k \|\ty_{k+1} - \tx_k\|^2\le \eta_k(u)- \eta_{k+1}(u)+\frac{\bar m}{2} \left( \frac{1}{M_k} D_\Omega^2 + a_k  D_{\cal H} ^2\right);
		\end{equation}
		\item[(b)] if $k \in {\cal B}$ then
		\begin{equation}\label{ineq:case2}
		0\le \eta_k(u)- \eta_{k+1}(u)+\frac{\bar m}{2} \left( \frac{1}{M_k} D_\Omega^2 + a_k  D_{\cal H} ^2\right) + \frac{\bar M}{2M_k}D_\Omega^2.
		\end{equation}
	\end{itemize}
\end{lemma}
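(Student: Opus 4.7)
The plan is to derive both parts as direct consequences of Lemma~\ref{lem:tech2}, tuning the coefficient $M_k-F_k$ appearing on its left-hand side according to whether $k \in {\cal G}$ or $k \in {\cal B}$.

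For part (a), when $k \in {\cal G}$, step 3 of AC-FISTA sets $\ty_{k+1}=y_{k+1}^g$, so that $F_k={\cal C}(y_{k+1}^g;\tx_k)=C_k$ by \eqref{eq:C}; combined with the defining inequality of ${\cal G}$ in \eqref{def:G}, this gives $C_k \le 0.9 M_k$, whence $M_k-F_k \ge M_k/10$. Substituting this lower bound into Lemma~\ref{lem:tech2} produces \eqref{ineq:case1} with the constant $1/20 = (1/10)/2$.

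For part (b), when $k \in {\cal B}$, the coefficient $M_k-F_k$ on the left-hand side of Lemma~\ref{lem:tech2} may be negative, so the strategy is to absorb this (potentially negative) term into the right-hand side after bounding its magnitude. Two estimates are needed. First, the definition \eqref{eq:cal C} of ${\cal C}$ together with the second inequality in \eqref{ineq:upper} (applied with $M=\bar M$) yields $F_k \le \bar M$, and hence $F_k-M_k \le \bar M$. Second, the bad-iteration formula \eqref{eq:ty} combined with the definition of $\tx_k$ in \eqref{eq:aktx} gives
\[
\ty_{k+1} - \tx_k = \frac{a_k}{A_{k+1}}(x_{k+1}^b - x_k),
\]
and since $x_{k+1}^b \in {\cal H} \subset \Omega$ and $x_k \in \Omega$ by Lemma~\ref{lem:gamma}(d), we have $\|x_{k+1}^b - x_k\| \le D_\Omega$. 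Using the identity $A_{k+1}=M_k a_k^2$ from \eqref{eq:rel} then yields
\[
A_{k+1}\|\ty_{k+1}-\tx_k\|^2 \le \frac{a_k^2}{A_{k+1}} D_\Omega^2 = \frac{D_\Omega^2}{M_k}.
\]
Multiplying the two estimates gives $(F_k-M_k) A_{k+1}\|\ty_{k+1}-\tx_k\|^2/2 \le \bar M D_\Omega^2/(2M_k)$, which is precisely the extra term in \eqref{ineq:case2}. Moving this bound across the inequality in Lemma~\ref{lem:tech2} produces \eqref{ineq:case2}.

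The main obstacle is the bad-iteration estimate in part (b): since neither a descent-type inequality nor a favorable curvature estimate is available at bad iterates, one needs a geometric reason confining the difference $\ty_{k+1}-\tx_k$. The decisive observation is that \eqref{eq:ty} places $\ty_{k+1}-\tx_k$ parallel to $x_{k+1}^b-x_k$, and both endpoints lie in the compact set $\Omega$, so the diameter $D_\Omega$ alone (rather than any quantity depending on $\bar M$ or $\bar L$) controls the correction term on the right-hand side.
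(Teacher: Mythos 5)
Your proposal is correct and follows essentially the same route as the paper: part (a) is obtained by plugging $F_k=C_k\le 0.9M_k$ into Lemma~\ref{lem:tech2}, and part (b) uses the same ingredients the paper does, namely $F_k\le \bar M$, the identity $\ty_{k+1}-\tx_k=\frac{a_k}{A_{k+1}}(x_{k+1}^b-x_k)$ together with \eqref{eq:rel}, and Lemma~\ref{lem:gamma}(d) to bound $\|x_{k+1}^b-x_k\|$ by $D_\Omega$. The only difference is a cosmetic rearrangement of the algebra in (b) (you bound the two factors separately and multiply, while the paper rewrites the left-hand side as $\frac12(1-F_k/M_k)\|x_{k+1}-x_k\|^2$ before estimating), which does not change the argument.
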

\begin{proof}
	(a) Let $ k\in{\cal G}$ be given.
	It is easy to see that $ F_k \le 0.9M_k $ due to the fact that $ F_k=C_k $ when $ k\in {\cal G} $ and \eqref{def:G}, and hence \eqref{ineq:case1}  immediately follows from this observation and \eqref{ineq:good}.

	(b) Let $ k\in{\cal B} $ be given.
	Noting that $ x_{k+1}=x_{k+1}^b $ and $ \ty_{k+1}=y_{k+1}^b $,
	and using relations \eqref{ineq:good} and \eqref{eq:rel}, and the definitions of $ y_{k+1}^b $ and $ \tx_k $ in \eqref{eq:ty} and \eqref{eq:aktx}, respectively,
we conclude that
	\[
	\begin{aligned}
      \eta_k(u) &- \eta_{k+1}(u)+\frac{\bar m}{2} \left( \frac{1}{M_k} D_\Omega^2 + a_k  D_{\cal H} ^2\right) \ge
      \frac{M_k-F_k}{2}A_{k+1}\|\ty_{k+1}-\tx_k\|^2 \\
     &= \frac{M_k-F_k}{2}A_{k+1}\left \| \frac{A_ky_k+a_k x_{k+1}}{A_{k+1}} -  \frac{A_ky_k+a_k x_{k}}{A_{k+1}} \right \|^2 
= \frac{(M_k-F_k) a_k^2}{2A_{k+1} }\|x_{k+1}-x_k\|^2 \\
&= \frac12 \left(1 - \frac{F_k}{M_k} \right)
\|x_{k+1}-x_k\|^2
\ge \frac12 \left( 1 - \frac{\bar M}{M_k} \right) \|x_{k+1}-x_k\|^2
	\end{aligned}
	\]
	where the last inequality is due to the fact that $ F_k \le \bar M $,
    and hence that \eqref{ineq:case2} holds  in view of Lemma \ref{lem:gamma}(d).
	\end{proof}
	
	We now state a technical result which will be used
	to derive a consequence of Lemma \ref{lem:basic}.

\begin{lemma}\label{estimates}
	The sequences $ \{A_k\} $ and $ \{M_k\} $ generated by AC-FISTA satisfy the following statements:
	\begin{itemize}
		\item[a)] for every $ k\ge 1 $, we have
		$A_k M_k^{hm} \le k^2$;
		\item[b)] 
		for every $ k\ge 4 $, we have $ A_k M_k \ge k^2/12 $;
		\item[c)]
		for every $i \in \{1,\ldots,k\}$, we have
	 $ i M_i \le k M_k $.
	\end{itemize}
\end{lemma}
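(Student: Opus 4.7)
The three parts all flow from the identity \eqref{eq:rel}, namely $A_{k+1} = M_k a_k^2$ with $a_k = A_{k+1}-A_k$. Squaring and rearranging gives $A_{k+1}-A_k = \sqrt{A_{k+1}/M_k}$, hence
\[
\sqrt{A_{k+1}}-\sqrt{A_k} \;=\; \frac{A_{k+1}-A_k}{\sqrt{A_{k+1}}+\sqrt{A_k}} \;=\; \frac{\sqrt{A_{k+1}/M_k}}{\sqrt{A_{k+1}}+\sqrt{A_k}}.
\]
Bounding the denominator below by $\sqrt{A_{k+1}}$ yields $\sqrt{A_{k+1}}-\sqrt{A_k} \le 1/\sqrt{M_k}$, and above by $2\sqrt{A_{k+1}}$ yields $\sqrt{A_{k+1}}-\sqrt{A_k} \ge 1/(2\sqrt{M_k})$. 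These two one-step inequalities drive the upper and lower bounds in (a) and (b) respectively.

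For (a), telescoping the first inequality from $i=0$ to $i=k-1$ gives $\sqrt{A_k} \le \sum_{i=0}^{k-1} 1/\sqrt{M_i}$. Squaring and applying Cauchy--Schwarz in the form $(\sum 1/\sqrt{M_i})^2 \le k\sum 1/M_i$ produces $A_k \le k\sum_{i=0}^{k-1} 1/M_i = k^2/M_k^{hm}$, which is the claim.

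For (c), I argue from \eqref{eq:M} rewritten as $\alpha k M_k = \max\{\alpha k\gamma M,\; \sum_{j=0}^{k-1} C_j\}$ and prove by induction on $k$ that $\{kM_k\}$ is non-decreasing, which is precisely the inequality $iM_i \le kM_k$. A short case analysis on which of the two terms achieves the max at indices $k$ and $k+1$ yields the inductive step.

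For (b), telescoping the second one-step inequality gives $\sqrt{A_k} \ge \frac{1}{2}\sum_{i=0}^{k-1} 1/\sqrt{M_i}$. Part (c) then implies $1/\sqrt{M_i} \ge \sqrt{i/(kM_k)}$ for $1\le i\le k-1$, and separately $M_0 = \gamma M \le M_k$ gives $1/\sqrt{M_0} \ge 1/\sqrt{M_k}$. Combining the two and summing produces
\[
\sqrt{A_k M_k} \;\ge\; \frac{1}{2}\Bigl(1 + \frac{1}{\sqrt{k}}\sum_{i=1}^{k-1}\sqrt{i}\Bigr).
\]
An integral comparison $\sum_{i=1}^{k-1}\sqrt{i} \ge \frac{2}{3}(k-1)^{3/2}$ then reduces the claim $A_kM_k \ge k^2/12$ to an elementary inequality in $k$, verifiable for $k\ge 4$. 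The main delicate point will be exactly this constant verification: keeping only the crude term $(k-1)^{3/2}/(3\sqrt{k})$ (i.e., discarding the ``$1+$'' from the $i=0$ contribution) reaches $k^2/12$ only around $k\ge 11$, so retaining the $i=0$ term via $M_0 \le M_k$ is essential for bringing the threshold down to $k\ge 4$ as stated.
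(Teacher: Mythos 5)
Your parts (a) and (b) are correct and, in substance, reproduce the estimates that the paper simply imports by citation: the paper proves (a) by invoking Lemma 4.7 of \cite{liang2021average} and (b) by adapting the proof of (37) there, and the mechanics are exactly your one-step bounds $\tfrac{1}{2\sqrt{M_k}}\le \sqrt{A_{k+1}}-\sqrt{A_k}\le \tfrac{1}{\sqrt{M_k}}$ (which follow from $\sqrt{A_{k+1}}=\sqrt{M_k}\,a_k$ and \eqref{eq:rel}), Cauchy--Schwarz for (a), and the comparison $\sum_{i=1}^{k-1}\sqrt{i}\ge \tfrac23(k-1)^{3/2}$ for (b), which is the very inequality displayed in the paper's proof. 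Your point that the $i=0$ term (via $M_0=\gamma M\le M_k$) must be retained to reach the threshold $k\ge 4$ is well taken, and the resulting elementary inequality $1+2(k-1)^{3/2}/(3\sqrt{k})\ge k/\sqrt{3}$ does hold for all $k\ge4$; you leave its verification implicit, but it can be closed, e.g., by checking the value at $k=4$ and that the left-minus-right difference has nonnegative derivative for $k\ge4$. So (a) is complete, and (b) is complete modulo that routine check --- and modulo part (c), on which it relies.

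The genuine gap is in (c). Your induction needs $\{kM_k\}_{k\ge1}$ to be non-decreasing, where $kM_k=\max\{k\gamma M,\ \sum_{j=0}^{k-1}C_j/\alpha\}$ by \eqref{eq:M}, and the proposed case analysis closes only when both arguments of the max are non-decreasing in $k$, i.e., only when $C_k\ge 0$. But $C_k={\cal C}(y^g_{k+1};\tx_k)$ is an observed curvature of the nonconvex $f$ and is only bounded below by $-\bar m$ (cf.\ \eqref{ineq:upper}); nothing in the algorithm forces it to be nonnegative. In the case $M_k=\sum_{j<k}C_j/(\alpha k)>\gamma M$ with $C_k<0$ and $(k+1)\gamma M<\sum_{j<k}C_j/\alpha$, the step fails: schematically, with $\alpha=1$, $\gamma M=1$, $C_0=10$, $C_1=-10$ one gets $M_1=10$ but $2M_2=2$, so $1\cdot M_1\le 2\cdot M_2$ is false. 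The paper's own route for (c) is to cite Lemma 4.6 of \cite{liang2021average}, which is proved there for the AC-ACG estimates built from the nonnegative curvatures $\tilde C_j\ge {\cal L}(y^g_{j+1};\tx_j)\ge 0$; your argument is the natural transcription of that proof, but in the AC-FISTA setting it requires the additional ingredient that the partial sums $\sum_j C_j$ be non-decreasing (equivalently $C_j\ge0$), or some substitute for it, and as written this is missing. Since (b) uses (c), this gap propagates to your proof of (b) as well.
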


\begin{proof}
	a) This statement follow from  Lemma 4.7  of \cite{liang2021average}
	and the definition of $ M_k^{hm} $ in \eqref{eq:L}.
	
	b) This statement can be proved by following an argument similar to the proof of (37) of \cite{liang2021average}.
	Note that the only difference is for every $k\ge 4$, we have
	\[
	\sum_{i=1}^{k-1}\sqrt{i}\ge \int_{0}^{k-1}\sqrt{x}dx=\frac23(k-1)^{3/2} \ge\frac23\left( \frac {3}{4}k\right) ^{3/2}\ge \frac13 k^{3/2}.
	\]
	
	c) This is Lemma 4.6 of \cite{liang2021average}.
\end{proof}


The following result follows by combining the
conclusions (a) and (b) of
Lemma \ref{lem:basic}, and using Lemma \ref{estimates}(a).

\begin{lemma}\label{lem:sum}
	For every $ u\in{\cal H}  $ and $ k\ge 1 $, we have
	\begin{equation}\label{ineq:sum}
		\sum_{i\in {\cal G}_k}\left( A_{i+1}M_i\|\ty_{i+1}-\tx_i\|^2\right) 
		\le 10\left( d_0^2 +  \frac{\bar m + \bar M}{M_k^{hm}}  D_\Omega^2k + \frac{\bar m}{M_k^{hm}} D_{\cal H} ^2 k^2 \right).
	\end{equation}
\end{lemma}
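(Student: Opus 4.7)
The plan is to combine the two inequalities of Lemma~\ref{lem:basic}, one summed over good iterations and the other over bad iterations, and then exploit the telescoping structure of $\eta_i(u)-\eta_{i+1}(u)$ together with the harmonic-mean estimate in Lemma~\ref{estimates}(a).

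First, I would choose $u \in {\cal H}$ to be an optimal solution $u_*$ of \eqref{eq:PenProb2Intro} whose distance to $x_0$ equals $d_0$. Since $A_0 = 0$ and $x_0 = y_0$, the definition of $\eta_k$ in \eqref{eq:eta} gives $\eta_0(u_*) = \tfrac12 d_0^2$, while $\eta_k(u_*) \ge 0$ holds for every $k$ because $\phi(y_k) \ge \phi(u_*) = \phi_*$.

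Next, I would add \eqref{ineq:case1} over $i \in {\cal G}_k$ and \eqref{ineq:case2} over $i \in {\cal B}_k$ with $u = u_*$. The left-hand sides combine to give exactly $\tfrac{1}{20}\sum_{i\in{\cal G}_k} A_{i+1}M_i\|\ty_{i+1}-\tx_i\|^2$ (the bad iterations contribute $0$ on the left). On the right, the terms $\eta_i(u_*) - \eta_{i+1}(u_*)$ telescope from $i=0$ to $i=k-1$, yielding $\eta_0(u_*) - \eta_k(u_*) \le \tfrac12 d_0^2$. The remaining terms are
\[
\frac{\bar m}{2}\sum_{i=0}^{k-1}\Bigl(\frac{1}{M_i}D_\Omega^2 + a_i D_{\cal H}^2\Bigr) + \frac{\bar M}{2}\sum_{i\in{\cal B}_k}\frac{1}{M_i}D_\Omega^2,
\]
which I would bound using $\sum_{i=0}^{k-1} 1/M_i = k/M_k^{hm}$ (from the definition of $M_k^{hm}$ in \eqref{eq:L}), the telescoping identity $\sum_{i=0}^{k-1} a_i = A_k$ together with Lemma~\ref{estimates}(a) giving $A_k \le k^2/M_k^{hm}$, and the trivial bound $\sum_{i\in{\cal B}_k} 1/M_i \le \sum_{i=0}^{k-1} 1/M_i = k/M_k^{hm}$.

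Multiplying through by $20$ and grouping the $\bar m$ and $\bar M$ contributions to $D_\Omega^2$ then yields \eqref{ineq:sum}. I do not anticipate a genuine obstacle here: the proof is essentially a telescoping-plus-bookkeeping argument that assembles the already-prepared one-step estimates of Lemma~\ref{lem:basic} with the scalar estimates of Lemma~\ref{estimates}(a). The one point requiring mild care is making sure the bad-iteration term $\tfrac{\bar M}{2M_i}D_\Omega^2$ is summed only over ${\cal B}_k$ before being relaxed to a sum over all $i$, so that the resulting coefficient of $D_\Omega^2$ comes out as $\bar m + \bar M$ rather than $2\bar m + \bar M$.
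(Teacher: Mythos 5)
Your proposal is correct and follows essentially the same route as the paper: take $u$ to be the optimal solution nearest $x_0$, sum the good- and bad-iteration estimates of Lemma~\ref{lem:basic} over all $i\le k-1$, telescope the $\eta_i$ terms using $\eta_0(u)=d_0^2/2$ and $\eta_k(u)\ge 0$, and finish with $\sum_{i=0}^{k-1}1/M_i=k/M_k^{hm}$, $\sum_{i=0}^{k-1}a_i=A_k$, and $A_kM_k^{hm}\le k^2$ from Lemma~\ref{estimates}(a). Your bookkeeping of the bad-iteration $\bar M$ term (summing it over ${\cal B}_k$ and then relaxing to all indices) matches the paper's coefficient $\bar m+\bar M$ exactly.
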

\begin{proof}
   	Let $ x_* \in X_*$ be such that $ x_*=\argmin\{\|x_0-u\|: u\in X_* \} $ be given and denote $ \|x_0-x_*\| $ by $ d_0 $.
    In view of the definition of $ \eta_k $ in \eqref{eq:eta}, we observe that $ \eta_k(x_*)\ge 0 $ for every $ k\ge 0 $ and $ \eta_0(x_*)=d_0^2/2 $.
	Adding \eqref{ineq:case1} and \eqref{ineq:case2} with $ u=x_* $
	as $ k $ varies in $ {\cal G}_k \cup {\cal B}_k $, and
	using the previous observation and the definition of $ \eta_k $ in \eqref{eq:eta}, we have that for $ k\ge 1 $,
	\[
		\sum_{i\in {\cal G}_k}\left( A_{i+1} M_i \|\ty_{i+1} - \tx_i\|^2\right) 
		\le 10\left( d_0^2 + (\bar m + \bar M)D_\Omega^2\sum_{i=0}^{k-1}\frac{1}{M_i} + \bar mD_{\cal H} ^2A_k \right).
	\]
	Inequality \eqref{ineq:sum} now follows from the above conclusion,
	the definition of $ M_k^{hm} $ in \eqref{eq:L},
	and the second inequality in Lemma \ref{estimates}(a).
\end{proof}

The two following
technical results require Condition A
to hold. 
Recall that sufficient conditions for Condition A to hold have been discussed in the paragraph containing \eqref{eq:alpha}.
Moreover, Subsection \ref{subsec:practice} discusses the likelihood that Condition A holds in the practical setting of AC-FISTA.

Recall from
the discussion on the line above \eqref{eq:alpha} that Condition A always holds with $ k_0=12 $ if $ \alpha $ is chosen so as to satisfy
\eqref{eq:alpha}.
However, our analysis may also hold
for $\alpha$'s
that do not
satisfy the restrictive condition
\eqref{eq:alpha} as long
as the resulting sequence
$\{|{\cal B}_k|\}$ satisfy Condition A
(e.g., see the last paragraph in Subsection \ref{subsec:practice}
which argues that this condition practically holds
for the $(0.5,10^{-6})$-AC-FISTA).


\begin{lemma}\label{lem:Gk}
Assume that Condition A holds and, 
	for every $k\ge 1$, define 
	\[
		\bar {\cal G}_k:=\{i\in{\cal G}_k :  i\ge \lceil k/3 \rceil \}.
	\]
    Then, $ |\bar {\cal G}_k| \ge k/4 $ for every $ k\ge \max\{12,k_0\} $.
\end{lemma}
\begin{proof}
	Using the definitions of $ {\cal B}_k $ in \eqref{def:Gk} and $ \bar {\cal G}_k $ above, we have
 \[
 \bar {\cal G}_k \cup {\cal B}_k \supset \left\{ \left \lceil \frac{k}{3} \right \rceil, \ldots, k-1 \right\},
 \]
 and hence that
 \[
 |\bar {\cal G}_k| +
 | {\cal B}_k| =
 |\bar {\cal G}_k \cup {\cal B}_k | \ge
 k-\left \lceil \frac{k}{3} \right \rceil \ge \frac{2k}3 -1.
 \]
	This observation and Condition A then imply that for every $ k\ge \max\{12,k_0\} $,
	\[
	|\bar {\cal G}_k| \ge \frac{2k}{3} -1 -|{\cal B}_k| \ge  \frac k3 -1  \ge \frac k4,
	\]
 where the last inequality is due to the fact that $k\ge 12$.
\end{proof}



We now present an important inequality of our analysis that connects other key ingredients, i.e., Lemmas \ref{lem:sum}, \ref{lem:Gk}, and \ref{lem:new} below, for proving Theorem \ref{thm:main}.

\begin{lemma}\label{lem:crux}
Under Condition A, we have for every $ k\ge \max\{12,k_0\} $,
\begin{equation}\label{ineq:min}
    \min_{1 \le i \le k} \|v_i\| 
		\le \frac{8}{k^{3/2}} \left( \sum_{ i =\lceil k/3 \rceil }^{k-1}\frac{M_i+ L_i}{\sqrt{A_{i+1}M_i}}\right) \left( \sum_{ i \in {\cal G}_k} A_{i+1}M_i\|\ty_{i+1}-\tx_i\|^2\right)^{1/2}
\end{equation}
where $ L_i := {\cal L}(y^g_{i+1}; \tx_k)$ and ${\cal L}(\cdot;\cdot)$ is as in \eqref{eq:tC}.
\end{lemma}

\begin{proof}
    It follows from the definitions of $ L_k $ and $v_{k+1}$ in this lemma and \eqref{eq:vk}, respectively, and the triangle inequality that
	\[
	\|v_{k+1}\|\le (M_k + L_k)\|y_{k+1}^g - \tx_k\|.
	\]
	Using the above inequality, and the facts that $ \ty_{i+1}=y_{i+1}^g $ for $ i\in {\cal G}_k $ and $\bar {\cal G}_k \subset {\cal G}_k$, we have
	\begin{align}
	\min_{1 \le i \le k} \|v_i\| & \le \min_{ i \in \bar {\cal G}_k} \|v_i\|
	\le \min_{ i \in \bar {\cal G}_k}\left( \frac{M_i+ L_i}{\sqrt{A_{i+1}M_i}}\right) \left( \sqrt{A_{i+1}M_i}\|\ty_{i+1}-\tx_i\|\right) \nn \\
	&\le |\bar {\cal G}_k|^{-3/2} \left( \sum_{ i \in \bar {\cal G}_k}\frac{M_i+ L_i}{\sqrt{A_{i+1}M_i}}\right) \left( \sum_{ i \in \bar {\cal G}_k} A_{i+1}M_i\|\ty_{i+1}-\tx_i\|^2\right)^{1/2} \label{ineq:all1}
	\end{align}
	where the last inequality is due to Lemma 9 of \cite{kong2022accelerated} with $ k=| \bar {\cal G}_k| $, $p=3/2$, and
	\[
	a_i=\frac{M_i+ L_i}{\sqrt{A_{i+1}M_i}}, \quad b_i = \sqrt{A_{i+1}M_i}\|\ty_{i+1}-\tx_i\|.
	\]
    The conclusion of the proposition now follows from \eqref{ineq:all1}, the facts that $ \bar {\cal G}_k \subset \{\lceil k/3 \rceil, \ldots, k-1 \} $ and $ \bar {\cal G}_k \subset {\cal G}_k$, and Lemma \ref{lem:Gk}.
\end{proof}


In view of Lemmas \ref{lem:sum} and \ref{lem:crux},
it is sufficient to develop a bound on the first summation in \eqref{ineq:min} to obtain a bound on $\min_{1 \le i \le k} \|v_i\|$. Hence, we present the following lemma.

\begin{lemma}\label{lem:new}
	For every $ k\ge 12 $, we have
	\begin{equation}\label{ineq:sum1}
		\sum_{ i =\lceil k/3 \rceil}^{k-1}  \frac{M_i+ L_i}{\sqrt{A_{i+1}M_i}}\le 6\sqrt{3}\left( 2M_k + L_k^{avg}\right) 
	\end{equation}
	where $L_i$ is defined in Lemma \ref{lem:crux} and $ L_k^{avg} $ is as in \eqref{eq:L}.
\end{lemma}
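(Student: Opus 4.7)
The plan is to reduce the sum to one of the form $\sum 1/i$ and $\sum 1/i^2$ weighted by $L_i$ and $M_i$ respectively, using Lemma \ref{estimates}(b) to lower-bound the denominator $\sqrt{A_{i+1}M_i}$ by a linear function of $i$.

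First, I would observe that for every $i$ in the summation range we have $i\ge \lceil k/3\rceil \ge 4$ (since $k\ge 12$). Because $A_{i+1}\ge A_i$, Lemma \ref{estimates}(b) applied with $k=i$ gives
\[
\sqrt{A_{i+1}M_i}\ \ge\ \sqrt{A_iM_i}\ \ge\ \frac{i}{2\sqrt{3}}.
\]
Hence the lemma will follow once one shows
\[
\sum_{i=\lceil k/3\rceil}^{k-1}\frac{M_i+L_i}{i}\ \le\ 3(2M_k+L_k^{avg}),
\]
since multiplying by $2\sqrt{3}$ yields the claimed constant $6\sqrt{3}$.

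For the $L_i$ piece, I would use the trivial bound $1/i\le 3/k$ valid on the summation range, which gives
\[
\sum_{i=\lceil k/3\rceil}^{k-1}\frac{L_i}{i}\ \le\ \frac{3}{k}\sum_{i=0}^{k-1}L_i\ =\ 3L_k^{avg}
\]
by the definition of $L_k^{avg}$ in \eqref{eq:L}. For the $M_i$ piece, I would invoke Lemma \ref{estimates}(c) to replace $M_i$ by $kM_k/i$, then telescope via $1/i^2\le 1/(i-1)-1/i$:
\[
\sum_{i=\lceil k/3\rceil}^{k-1}\frac{M_i}{i}\ \le\ kM_k\sum_{i=\lceil k/3\rceil}^{k-1}\frac{1}{i^2}\ \le\ \frac{kM_k}{\lceil k/3\rceil-1}\ \le\ \frac{3kM_k}{k-3}\ \le\ 4M_k,
\]
the last inequality using $k\ge 12$ so that $k/(k-3)\le 4/3$. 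Adding the two contributions yields $4M_k+3L_k^{avg}\le 3(2M_k+L_k^{avg})$, which finishes the argument.

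There is no serious obstacle here; the main thing to check carefully is that the $i\ge 4$ hypothesis of Lemma \ref{estimates}(b) is available for every index in the sum, which is precisely why the statement is restricted to $k\ge 12$. The minor bookkeeping step is converting the telescoping bound $1/(\lceil k/3\rceil-1)$ into the clean form $3/(k-3)$ and then into an absolute constant via $k\ge 12$; the looseness in these estimates is exactly what creates the ratio $4$ vs.\ $6$ in the final comparison.
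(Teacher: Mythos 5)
Your proposal is correct and follows essentially the same route as the paper: lower-bound $\sqrt{A_{i+1}M_i}\ge\sqrt{A_iM_i}\ge i/(2\sqrt 3)$ via Lemma \ref{estimates}(b), then handle the $M_i$ part with Lemma \ref{estimates}(c) and a bound on $\sum 1/i^2$, and the $L_i$ part with $1/i\le 3/k$ and the definition of $L_k^{avg}$. The only difference is cosmetic: your telescoping estimate gives $4M_k$ where the paper crudely bounds the $1/i^2$ sum by (number of terms)$\times 9/k^2$ to get $6M_k$; both fit under the stated constant $6\sqrt{3}(2M_k+L_k^{avg})$.
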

\begin{proof}
	In view of the assumption that $k\ge 12$, it is easy to see that $i\ge 4$ for $i\ge \lceil k/3 \rceil$.
	This observation, the fact that $ A_{i+1}\ge A_i $
	and Lemma \ref{estimates}(b) imply that for every $ k\ge 12 $, 
	\[
	\sum_{ i =\lceil k/3 \rceil}^{k-1} \frac{M_i+ L_i}{\sqrt{A_{i+1}M_i}}\le \sum_{ i =\lceil k/3 \rceil}^{k-1}  \frac{M_i+ L_i}{\sqrt{A_{i}M_i}} \le 2\sqrt{3} \sum_{ i =\lceil k/3 \rceil}^{k-1} \frac{M_i+ L_i}{i}.
	\]
	Using Lemma \ref{estimates}(c), we have
	\[
	\sum_{ i =\lceil k/3 \rceil}^{k-1}  \frac{M_i}{i} \le \sum_{ i =\lceil k/3 \rceil}^{k-1}  \frac{kM_k}{i^2} 
	\le k M_k \frac{2k/3}{(k/3)^2} = 6M_k.
	\]
	It is easy to see from the definition of $ L_k^{avg} $ in \eqref{eq:L} that
	\[
	\sum_{ i =\lceil k/3 \rceil}^{k-1}  \frac{L_i}{i}\le \frac{3}{k}\sum_{ i =\lceil k/3 \rceil}^{k-1}  L_i
	\le \frac{3}{k}\sum_{ i =0}^{k-1} L_i = 3 L_k^{avg}.
	\]
	Inequality \eqref{ineq:sum1} immediately follows from the above three inequalities.
\end{proof}


We are now ready to prove Theorem \ref{thm:main}.

\vgap

\noindent
{\bf Proof of Theorem \ref{thm:main}:}
	(a) See Lemma \ref{lem:vk}.
	
	(b) Putting together Lemmas \ref{lem:sum}, \ref{lem:crux}, and \ref{lem:new}, and using the inequality $\sqrt{a+b} \le \sqrt{a} + \sqrt{b}$ for every $a, b\ge 0$, we have
	\begin{align*}
		\min_{1 \le i \le k} \|v_i\| 
		&\le \frac{48\sqrt{30}}{k^{3/2}}  \left( 2M_k + L_k^{avg} \right) \left( d_0 + \frac{\sqrt{\bar m } + \sqrt{\bar M }}{\sqrt{M_k^{hm} }}  D_\Omega \sqrt{k} + \frac{\sqrt{\bar m}}{\sqrt{M_k^{hm} }} D_{\cal H}  k \right).
	\end{align*}
	Statement b) now follows from the above inequality and the definitions of $\theta_k$ and $\tau_k$ in \eqref{def:theta}.

(c) Using the definition of $ M_{k+1} $ in \eqref{eq:M}, and the facts that $ \gamma < 1 $ and $C_k \le \bar M \le M$ for every $ k \ge 0 $, we have for every $k \ge 0$,
	\[
		M_{k+1} = {\cal O}\left( \gamma M + \frac{\bar M}{\alpha}  \right) = {\cal O}\left( \frac{M}{\alpha}  \right),
	\]
 and hence the inequality on $M_k$ in \eqref{ineq:tau} holds.
 Using the definition of $ M_k^{hm} $ in \eqref{eq:L} and Lemma \ref{estimates}(c), we have
	\[
	M_k^{hm} = \frac{k}{\sum_{i=0}^{k-1} \frac{1}{M_i}} \le \frac{k}{\sum_{i=0}^{k-1} \frac{i}{k M_k}} = \frac{2k}{k-1} M_k,
	\]
	and hence the first inequality on $ \theta_k $ in \eqref{ineq:tau} holds in view of the definition of $ \theta_k $. The second inequality on $ \theta_k $ in \eqref{ineq:tau} immediately follows from the definition of $ \theta_k $ in \eqref{def:theta} and the fact that $ M_i\ge \gamma M $ for every $ i\ge 0 $ (see \eqref{eq:M}).
	The bound on $ \tau_k $ in \eqref{ineq:tau} is a direct consequence of the definition of $ \tau_k $ in \eqref{def:theta}, and the facts that $ L_k^{avg} \le \bar L $ and $ M_k\ge \gamma M $.
 Finally, the bound $ {\cal O}(1/(\alpha \gamma)) $ on $ \theta_k $ immediately follows from the bound on $M_k$ and the second inequality on $ \theta_k $ in \eqref{ineq:tau}.
\QEDA

\section{Numerical results}\label{sec:numerics}


This section reports computational results of AC-FISTA and
a corresponding restart variant against
five other state-of-the-art algorithms on three instances of N-SCO problems: support vector machine (Subsection \ref{subsec:SVM}),
quadratic programming (Subsection \ref{subsec:QP}) and matrix completion (Subsection \ref{subsec:MC}).

We start by describing the implementation of AC-FISTA and its restart variant used in our computational benchmark.
Our implementation of AC-FISTA 
sets $M_0=0.01M$, computes $M_{k+1}$ according to \eqref{eq:M} with $(\alpha,\gamma)=(0.5,10^{-6})$,
and chooses $ y_{k+1}=\ty_{k+1} $.
The restart variant of AC-FISTA uses the same parameters as AC-FISTA
but rejects $ y_{k+1} $ whenever $ k\in {\cal G} $ and $ \phi(y_{k+1})\ge \phi(y_k) $ in which case it sets
$ x_k=y_k$ and $ A_k=0$, and repeats the $k$-th iteration.

We compare our methods with five other ACG variants, namely: 
(i) the UPFAG method in \cite{LanUniformly};
(ii) the ADAP-NC-FISTA described in \cite{liang2019fista};
(iii) the theoretical AC-ACG method proposed in \cite{liang2021average} (referred
to as ACT in its Section 5);
and (iv)
restart variants of the methods
in (ii) and (iii) which are
described in the paragraph
below.
For the sake of simplicity, we use the
abbreviations UP, AD, AC and AF to refer to UPFAG, ADAP-NC-FISTA, AC-ACG and AC-FISTA, respectively, both in the discussions and tables below. Moreover, we use AD(R), AC(R) and AF(R) to denote the restart variants of AD, AC and AF, respectively.

This paragraph provides details about the five other ACG variants used in our benchmark.
UP is described in Algorithm 1 of \cite{LanUniformly} and
the code for it was provided by the authors of \cite{LanUniformly}. In particular, we have used their choice of parameters
but have slightly modified the code to accommodate for our termination criterion, i.e., Definition \ref{def:approx pt}.
More specifically, the input parameters $ (\hat \lam_0, \hat \beta_0, \gamma_1,\gamma_2,\gamma_3,\delta,\gamma)$  of UP
were set to
$ (1/L,1/L,0.4,0.4, 1,10^{-3},10^{-10}) $.
AD was implemented by
the authors according to its description in Section 3 in \cite{liang2019fista}.
The input triple $ (M_0, m_0, \theta) $ of AD was 
set to $ (1,1000,1.25) $ in Subsections \ref{subsec:SVM} and \ref{subsec:QP}, and $ (1,1,1.25) $ in Subsection \ref{subsec:MC}.
Method AC is exactly the theoretical AC-ACG method of \cite{liang2021average} with parameter pair $(\alpha,\gamma)$ set to $(0.5,0.01)$.
Moreover, the restart variant AC(R) (resp., AD(R)) uses the same set of parameters as AC (resp., AD) and restarts in the same way as AF(R) does.

All seven methods terminate when a pair $(z,v)$ satisfying a relative termination criterion
\[
v\in \nabla f(z)+\partial h(z), \qquad  \frac{\|v\|}{\|\nabla f(z_{0})\|+1}\leq \hat \rho
\]
is obtained,
where $z_0$ is the initial point, $ \hat \rho=10^{-7} $ in Subsections \ref{subsec:SVM} and \ref{subsec:QP}, and $ \hat \rho=5\times 10^{-4} $ in Subsection \ref{subsec:MC}.
We run all numerical experiments using
MATLAB R2017b on a MacBook Pro with a quad-core Intel Core i7 processor and 16 GB of memory.


\subsection{Support Vector Machine} \label{subsec:SVM}
This subsection discusses the performance of the methods in our computational benchmark for solving a support vector machine (SVM) problem (see (4.1) in \cite{LanUniformly}). For given data points $ \{(u_i,v_i)\}_{i=1}^p $, where $ u_i\in \R^n $ is a feature vector and $ v_i\in \{-1,1\} $ denotes the corresponding label, we formulate the SVM
problem as
\begin{equation}\label{eq:SVM}
	\min \left\lbrace f(z):= \frac1p \sum_{i=1}^{p} \ell(u_i,v_i;z)+\frac{\lam}{2}\|z\|^2: z\in B_r \right\rbrace 
\end{equation}
where $ \ell(u_i,v_i;\cdot)=1-\tanh(v_i\inner{\cdot}{u_i}) $ is a nonconvex sigmoid loss function, $ \lam>0 $ is a regularization parameter and $B_r :=\{z\in \R^n:\|z\|\le r\} $ is a ball with radius $ r>0 $ and centered at the origin.

The SVM problem \eqref{eq:SVM} is an instance of \eqref{eq:PenProb2Intro} where
$ h $ is the indicator function of the ball $B_r$. We set $ \lam = 1/p$, $ r = 50 $ and $ \Omega = B_r $, where the set $ \Omega $ is introduced in (A2). 
It can be shown that $ f $ is differentiable everywhere and satisfies
\[
	m = M = L = \frac1p \sum_{i=1}^p \frac{4\sqrt{3}}{9}\|u_i\|^2 + \lam, \quad \forall i=1,\ldots,p.
\]


We now describe the datasets {\it SVM-1}, {\it SVM-2}, {\it SVM-3} and {\it SVM-4} considered in the numerical experiments. Each dataset contains data points $ \{(u_i,v_i)\}_{i=1}^p $ where $ u_i $ is a sparse vector with density $ d $ and its nonzero entries are drawn from the uniform distribution $ {\cal U}[0,1]$, and $ v_i=\text{sign}(\inner{\bar z}{u_i}) $ for some $ \bar z \in B_r$.
Table~\ref{tab:t1} lists basic statistics of the datasets.

\begin{table}[H]
	\begin{centering}
		\begin{tabular}{|>{\centering}p{2cm}|>{\centering}p{1.5cm}|>{\centering}p{1.5cm}|>{\centering}p{1.7cm}|>{\centering}p{1.5cm}|>{\centering}p{1.5cm}|>{\centering}p{1.5cm}|}
			\hline 
			{Dataset} & {$ n $}& {$ p $} & {Density $ d $} & { $ \lam $} & { $ r $} & { $ M$} \tabularnewline
			\hline 
			{\small{{\it SVM-1}}}  & {\small{}5000} & {\small{}500} & {\small{}5\%} & {\small{}0.002} & {\small{}50} & {\small{}13} \tabularnewline
			\hline
			{\small\text{{\it SVM-2}}} & {\small{}10000} & {\small{}1000} & {\small{}5\%} & {\small{}0.001} & {\small{}50} & {\small{}25} \tabularnewline
			\hline
			{\small\text{{\it SVM-3}}} & {\small{}15000} & {\small{}1000} & {\small{}5\%} & {\small{}0.001} & {\small{}50} & {\small{}38} \tabularnewline
			\hline
			{\small\text{{\it SVM-4}}} & {\small{}20000} & {\small{}500} & {\small{}5\%} & {\small{}0.002} & {\small{}50} & {\small{}50} \tabularnewline
			\hline
		\end{tabular}
		\par\end{centering}
	\caption{SVM datasets}\label{tab:t1}
\end{table}
\vspace{-5mm}

We start all seven methods from the same initial point $ z_0 $ that
is generated randomly and uniformly within the ball $ B_r $. 

Numerical results of the seven methods for solving \eqref{eq:SVM} with datasets {\it SVM-1}, {\it SVM-2}, {\it SVM-3} and {\it SVM-4} are given in Table \ref{tab:t2}.
Specifically,  the second to eighth columns
provide numbers of iterations and running times for the seven methods.
We do not report the best objective function values obtained
by all seven methods,
since they are essentially the same
on each instance.
The bold numbers highlight the method that has the best performance
in an instance of \eqref{eq:SVM}.

\begin{table}[H]
	\begin{centering}
		\begin{tabular}{|>{\centering}p{1.1cm}|>{\centering}p{.6cm}>{\centering}p{.7cm}>{\centering}p{.6cm}>{\centering}p{.8cm}>{\centering}p{.8cm}>{\centering}p{.8cm}>{\centering}p{.8cm}|}
			\hline 
			{\small{}Dataset} & \multicolumn{7}{c|}{\makecell{\small{}Iteration Count / \\ \small{}Running Time (s)}}
			\tabularnewline  
			\cline{2-8}
			& {\small{}UP} & {\small{}AD} & {\small{}AC} & {\small{}AF} & {\small{}AD(R)} & {\small{}AC(R)}  & {\small{}AF(R)} 
			\tabularnewline
			\hline 
			{\footnotesize{\it{SVM-1}}} & {\small{}250\\ 18} & {\small{}2333\\  52} & {\small{}1678\\ 62} & {\small{}604\\ 14} & {\small{}440\\  13} & {\small{}339\\ 15} & {\small{}160\\ \textbf{5}}
			\tabularnewline
			\hline 
			{\footnotesize{\it{SVM-2}}} & {\small{}254\\ 81} & {\small{}3996\\  {396}} & {\small{}4801\\ 772} & {\small{}1352\\ 144} & {\small{}549\\ 67} & {\small{}605\\ 110} & {\small{}230\\ \textbf{29}} 
			\tabularnewline
			\hline 
			{\footnotesize{\it{SVM-3}}} & {\small{}284\\ 137} & {\small{}3499\\ 529} & {\small{}6023\\  1505} & {\small{}1563\\ 248} & {\small{}503\\ 93} & {\small{}695\\ 187} & {\small{}200\\ \textbf{38}} 
			\tabularnewline
			\hline 
			{\footnotesize{\it{SVM-4}}} & {\small{}156\\ 50} & {\small{}1701\\  175} & {\small{}4136\\  661} & {\small{}823\\ 86} & {\small{}377\\ 46} & {\small{}630\\ 113} & {\small{}151\\ \textbf{19}} 
			\tabularnewline
			\hline 
		\end{tabular}
		\par\end{centering}
	\caption{Numerical results for solving \eqref{eq:SVM} with {\it SVM-1, 2, 3,} \& {\it 4}}\label{tab:t2}
\end{table}
\vspace{-3mm}


Recall that we have commented on the practical behavior of the ratios $ \theta_k $, $ \tau_k $ and $ |{\cal B}_k|/k $ in Subsection \ref{subsec:practice}. We now present the statistics of the three ratios of AF and AF(R) for solving the SVM problem \eqref{eq:SVM}.

\begin{table}[H]
	\begin{centering}
		\begin{tabular}{|>{\centering}p{1.2cm}|>{\centering}p{1.2cm}|>{\centering}p{1.2cm}|>{\centering}p{1.2cm}|>{\centering}p{1.2cm}|>{\centering}p{1.2cm}|>{\centering}p{1.2cm}|}
			\hline 
			{\small{}Dataset} & \multicolumn{3}{c|}{AF} & \multicolumn{3}{c|}{AF(R)}
			\tabularnewline  
			\cline{2-7}
			& {\small$ \bar \theta_k $} & {\small $ \bar \tau_k $} & {\small $ |{\cal B}_k|/k $} & {\small$ \bar \theta_k $} & {\small $ \bar \tau_k $} & {\small $ |{\cal B}_k|/k $}
			\tabularnewline
			\hline 
			{\small{{\it SVM-1}}}  & {\small{}1.34} & {\small{}0.55} & {\small{}31\%} & {\small{}2.16} & {\small{}0.55} & {\small{}37\%}  \tabularnewline
			\hline
			{\small{{\it SVM-2}}}  & {\small{}1.16} & {\small{}0.60} & {\small{}32\%} & {\small{}1.24} & {\small{}0.61} & {\small{}35\%}  \tabularnewline
			\hline
			{\small{{\it SVM-3}}}  & {\small{}1.04} & {\small{}0.58} & {\small{}26\%} & {\small{}1.35} & {\small{}0.62} & {\small{}32\%}  \tabularnewline
			\hline
			{\small{{\it SVM-4}}} & {\small{}0.93} & {\small{}0.55} & {\small{}21\%} & {\small{}1.25} & {\small{}0.60} & {\small{}37\%}  \tabularnewline
			\hline
		\end{tabular}
		\par\end{centering}
	\caption{Statistics of $\bar \theta_k$, $\bar \tau_k$ and $ |{\cal B}_k| $}\label{tab:SVM}
\end{table}
\vspace{-5mm}

In Table \ref{tab:SVM}, $ \bar \theta_k $ and $ \bar \tau_k $ are defined as
\[
	\bar \theta_k:= \max\{ \theta_k: k\ge 100 \}, \quad \bar \tau_k:= \max\{ \tau_k: k\ge 100 \}.
\]
The ratio $ |{\cal B}_k|/k $ represents the the percentage of bad iterations at the last iteration of each method.

In summary, computational results demonstrate that:
i) AF(R) is the best method in terms of running time; 
ii) AF(R) (resp., AD(R) an AC(R)) improves the results of AF (resp., AD and AC);
and iii) $ \bar \theta_k $ and $ \bar \tau_k $ are small and $ |{\cal B}_k|/k $ is no more than $ 37\% $.

\subsection{Quadratic Programming} \label{subsec:QP}

In this subsection, we consider solving a class of nonconvex quadratic programming (QP) problems.
More specifically, the QP problem reads as
\begin{equation}\label{testQPprobMat}
	\min\left\{ f(Z):=-\frac{\alpha_1}{2}\|D\mathcal{P}(Z)\|^{2}+\frac{\alpha_2}{2}\|\mathcal{Q}(Z)-b\|^{2}:Z\in O_{n}\right\}
\end{equation}
where $(\alpha_1,\alpha_2)\in \R^2_{++}$, $b\in \R^{ l}$,
$D\in \R^{n\times n}$,
$ O_n:=\{Z\in {\cal S}_+^n:\text{tr}(Z)=1\} $ denotes the spectraplex, and 
$ \mathcal{P}:{\cal S}_+^n\rightarrow \R^n $ and $ \mathcal{Q}:{\cal S}_+^n\rightarrow \R^l $ are linear operators given by
\begin{align*}
	\left[ \mathcal{P}(Z) \right]_i&=\inner{P_i}{Z}_F \quad \forall \, 1\le i\le n, \\
	\left[ \mathcal{Q}(Z) \right]_j&=\inner{Q_j}{Z}_F \quad \forall \, 1\le j\le l, 
\end{align*}
with $ P_i \in {\cal S}^n_+ $ and $ Q_j \in {\cal S}^n_+$.

We now describe the datasets {\it QP-1} and {\it QP-2} considered in the numerical experiments. Each dataset contains $ b $, $ D $, $ P_i $ for $ 1\le i \le n $ and $ Q_j $ for $ 1\le j \le l $.
The entries of $ b $ are sampled from the uniform distribution ${\cal U}[0,1]$.
The diagonal entries of the diagonal matrix $D$ are generated from the discrete uniform distribution ${\cal U}\{1,1000\}$.
Sparse matrices $ P_i $ and $ Q_j $ have the same density (i.e., percentage of nonzeros) $ d $ and their nonzero entries are generated from ${\cal U}[0,1]$.
Table~\ref{tab:t3} lists basic statistics of the datasets.

\begin{table}[H]
	\begin{centering}
		\begin{tabular}{|>{\centering}p{2cm}|>{\centering}p{1.5cm}|>{\centering}p{1.5cm}|>{\centering}p{1.7cm}|}
			\hline 
			{Dataset} & {$ l $}& {$ n $} & {Density $ d $} \tabularnewline
			\hline 
			{\small{{\it QP-1}}}  & {\small{}50} & {\small{}200} & {\small{}2.5\%} \tabularnewline
			\hline
			{\small\text{{\it QP-2}}} & {\small{}50} & {\small{}400} & {\small{}0.5\%} \tabularnewline
			\hline
		\end{tabular}
		\par\end{centering}
	\caption{Quadratic programming datasets}\label{tab:t3}
\end{table}
\vspace{-5mm}

The nonconvex QP problem \eqref{testQPprobMat} is an instance of \eqref{eq:PenProb2Intro} where $ h$ is the indicator function of the spectraplex $ O_n $.
The set $ \Omega $ introduced in (A2) is chosen as $ \Omega=\{Z\in {\cal S}_+^n: \|Z\|_F=1\} $. It is easy to see that $ \Omega \supset {\cal H}  $, which is required in (A2), since $ \Omega \supset O_n  = {\cal H}  $.
For given curvature pairs $(M,m)\in\R^2_{++}$, we $ L $ set to $ \max\{M,m\} $ and choose scalars $\alpha_1$ and $\alpha_2$  so that
$(\lambda_{\max}(\nabla^{2}f),\lambda_{\min}(\nabla^{2}f)) =(M,-m)$, where $\lambda_{\max}(\cdot)$ (resp., $\lambda_{\min}(\cdot)$)
denotes the largest (resp., smallest) eigenvalue function.

We start all seven methods from the same initial point $ Z_0 = I_n/n $ where $ I_n $ is an $ n\times n $ identity matrix, namely $ Z_0 $ is the centroid of $ O_n $.

Numerical results of the seven methods for solving \eqref{testQPprobMat}  with datasets {\it QP-1} and {\it QP-2} are given
in Tables \ref{tab:t4} and \ref{tab:t5}, respectively. 
Each table addresses a collection of instances with the same dataset and $ M=10^6 $.
Specifically, each table contains six
instances of \eqref{testQPprobMat}, their first column
specifies $m$ for
the instances.
The explanation of columns in Tables \ref{tab:t4} and \ref{tab:t5} excluding the first one is the same as that of Table \ref{tab:t2} (see the paragraphs preceding Table \ref{tab:t2}).
We do not report the best objective function values obtained
by all seven methods,
since they are essentially the same
on each instance.
The bold numbers highlight the method that has the best performance
in an instance of \eqref{testQPprobMat}.
\begin{table}[H]
	\begin{centering}
		\begin{tabular}{|>{\centering}p{.4cm}|>{\centering}p{.6cm}>{\centering}p{.6cm}>{\centering}p{.6cm}>{\centering}p{.6cm}>{\centering}p{.8cm}>{\centering}p{.8cm}>{\centering}p{.8cm}|}
			\hline 
			{\small{}$ m $} & \multicolumn{7}{c|}{\makecell{\small{}Iteration Count / \\ \small{}Running Time (s)}}
			\tabularnewline  
			\cline{2-8}
			& {\small{}UP} & {\small{}AD} & {\small{}AC} & {\small{}AF} & {\small{}AD(R)} & {\small{}AC(R)}  & {\small{}AF(R)} 
			\tabularnewline
			\hline 
			{\small{}$ 10^5 $} & {\small{}2633\\ 261} & {\small{}2206\\  {89}} & {\small{}1009\\  {55}} & {\small{}947\\ 30} & {\small{}787\\ 33} & {\small{}966\\ 55} & {\small{}419\\ \textbf{14}} 
			\tabularnewline
			\hline 
			{\small{}$ 10^4 $} & {\small{}7203\\ 705} & {\small{}2591\\  {104}} & {\small{}1820\\  {98}} & {\small{}1744\\ 55} & {\small{}1573\\ 66} & {\small{}1777\\ 99} & {\small{}601\\ \textbf{20}} 
			\tabularnewline
			\hline 
			{\small{}$ 10^3 $} & {\small{}5429\\ 540} & {\small{}2637\\  {109}} & {\small{}1712\\  {92}} & {\small{}2000\\ 63} & {\small{}1552\\ 65} & {\small{}1709\\ 100} & {\small{}773\\ \textbf{26}} 
			\tabularnewline
			\hline 
			{\small{}$ 10^2 $} & {\small{}6891\\ 653} & {\small{}2639\\  {116}} & {\small{}1610\\  {95}} & {\small{}1687\\ 52} & {\small{}1666\\ 69} & {\small{}1600\\ 96} & {\small{}736\\ \textbf{25}} 
			\tabularnewline
			\hline 
			{\small{}$ 10 $} & {\small{}6479\\ 613} & {\small{}2640\\  {116}} & {\small{}1599\\  {95}} & {\small{}1804\\ 56} & {\small{}1675\\ 69} & {\small{}1593\\ 96} & {\small{}785\\ \textbf{26}} 
			\tabularnewline
			\hline 
		\end{tabular}
		\par\end{centering}
	\caption{Numerical results for solving \eqref{testQPprobMat} with {\it QP-1}}\label{tab:t4}
\end{table}
\vspace{-5mm}

\begin{table}[H]
	\begin{centering}
		\begin{tabular}{|>{\centering}p{.4cm}|>{\centering}p{.6cm}>{\centering}p{.6cm}>{\centering}p{.6cm}>{\centering}p{.6cm}>{\centering}p{.8cm}>{\centering}p{.8cm}>{\centering}p{.8cm}|}
			\hline 
			{\small{}$ m $} & \multicolumn{7}{c|}{\makecell{\small{}Iteration Count / \\ \small{}Running Time (s)}}
			\tabularnewline  
			\cline{2-8}
			& {\small{}UP} & {\small{}AD} & {\small{}AC} & {\small{}AF} & {\small{}AD(R)} & {\small{}AC(R)}  & {\small{}AF(R)} 
			\tabularnewline
			\hline 
			{\small{}$ 10^5 $} & {\small{}56\\ 13} & {\small{}530\\  {56}} & {\small{}403\\  {58}} & {\small{}140\\ \textbf{12}} & {\small{}292\\ 30} & {\small{}414\\ 60} & {\small{}140\\ \textbf{12}}
			\tabularnewline
			\hline 
			{\small{}$ 10^4 $} & {\small{}105\\ 26} & {\small{}868\\  {93}} & {\small{}599\\ 85} & {\small{}195\\ \textbf{17}} & {\small{}364\\ 38} & {\small{}599\\ 86} & {\small{}182\\ \textbf{17}}
			\tabularnewline
			\hline 
			{\small{}$ 10^3 $} & {\small{}115\\ 29} & {\small{}900\\  {103}} & {\small{}564\\ 81} & {\small{}187\\ 16} & {\small{}384\\ 40} & {\small{}557\\ 80} & {\small{}80\\ \textbf{15}}
			\tabularnewline
			\hline 
			{\small{}$ 10^2 $} & {\small{}119\\ 32} & {\small{}904\\  {103}} & {\small{}559\\ 80} & {\small{}216\\ 19} & {\small{}385\\ 40} & {\small{}554\\ 82} & {\small{}179\\ \textbf{16}}
			\tabularnewline
			\hline 
			{\small{}$ 10 $} & {\small{}113 \\  {31}} & {\small{}904 \\  {104}} & {\small{}561\\ 86} & {\small{}221\\ 19} & {\small{}385\\ 40} & {\small{}554\\ 84} & {\small{}177\\ \textbf{16}} 
			\tabularnewline
			\hline 
		\end{tabular}
		\par\end{centering}
	\caption{Numerical results for solving \eqref{testQPprobMat} with {\it QP-2}}\label{tab:t5}
\end{table}
\vspace{-5mm}

We now present the statistics of $ \bar \theta_k $, $ \bar \tau_k $ and $ |{\cal B}_k|/k $ of AF and AF(R) for solving the nonconvex QP problem \eqref{testQPprobMat}.

\begin{table}[H]
	\begin{centering}
		\begin{tabular}{|>{\centering}p{0.4cm}|>{\centering}p{1.2cm}|>{\centering}p{1.2cm}|>{\centering}p{1.2cm}|>{\centering}p{1.2cm}|>{\centering}p{1.2cm}|>{\centering}p{1.2cm}|}
			\hline 
			{\small{}$ m $} & \multicolumn{3}{c|}{AF} & \multicolumn{3}{c|}{AF(R)}
			\tabularnewline  
			\cline{2-7}
			& {\small$ \bar \theta_k $} & {\small $ \bar \tau_k $} & {\small $ |{\cal B}_k|/k $} & {\small$ \bar \theta_k $} & {\small $ \bar \tau_k $} & {\small $ |{\cal B}_k|/k $}
			\tabularnewline
			\hline 
			{\small{} $ 10^5 $}  & {\small{}0.92} & {\small{}1.22} & {\small{}13\%} & {\small{}1.04} & {\small{}1.24} & {\small{}15\%}  \tabularnewline
			\hline
			{\small{} $ 10^4 $}  & {\small{}1.07} & {\small{}1.05} & {\small{}7\%} & {\small{}1.07} & {\small{}1.05} & {\small{}13\%}  \tabularnewline
			\hline
			{\small{} $ 10^3 $}  & {\small{}0.99} & {\small{}1.14} & {\small{}5\%} & {\small{}0.99} & {\small{}1.14} & {\small{}13\%}  \tabularnewline
			\hline
			{\small{} $ 10^2 $}  & {\small{}1.02} & {\small{}1.07} & {\small{}5\%} & {\small{}1.02} & {\small{}1.07} & {\small{}18\%}  \tabularnewline
			\hline
			{\small{} $ 10 $}  & {\small{}1.00} & {\small{}1.10} & {\small{}5\%} & {\small{}1.00} & {\small{}1.10} & {\small{}10\%}  \tabularnewline
			\hline
		\end{tabular}
		\par\end{centering}
	\caption{Statistics of $\bar \theta_k$, $\bar \tau_k$ and $ |{\cal B}_k| $ for {\it QP-1} }\label{tab:QP1}
\end{table}
\vspace{-5mm}

\begin{table}[H]
	\begin{centering}
		\begin{tabular}{|>{\centering}p{0.4cm}|>{\centering}p{1.2cm}|>{\centering}p{1.2cm}|>{\centering}p{1.2cm}|>{\centering}p{1.2cm}|>{\centering}p{1.2cm}|>{\centering}p{1.2cm}|}
			\hline 
			{\small{}$ m $} & \multicolumn{3}{c|}{AF} & \multicolumn{3}{c|}{AF(R)}
			\tabularnewline  
			\cline{2-7}
			& {\small$ \bar \theta_k $} & {\small $ \bar \tau_k $} & {\small $ |{\cal B}_k|/k $} & {\small$ \bar \theta_k $} & {\small $ \bar \tau_k $} & {\small $ |{\cal B}_k|/k $}
			\tabularnewline
			\hline 
			{\small{} $ 10^5 $}  & {\small{}0.60} & {\small{}3.08} & {\small{}13\%} & {\small{}0.60} & {\small{}3.08} & {\small{}13\%}  \tabularnewline
			\hline
			{\small{} $ 10^4 $}  & {\small{}0.68} & {\small{}2.29} & {\small{}18\%} & {\small{}0.72} & {\small{}2.16} & {\small{}15\%}  \tabularnewline
			\hline
			{\small{} $ 10^3 $}  & {\small{}0.69} & {\small{}2.38} & {\small{}16\%} & {\small{}0.74} & {\small{}2.14} & {\small{}15\%}  \tabularnewline
			\hline
			{\small{} $ 10^2 $}  & {\small{}0.69} & {\small{}2.40} & {\small{}14\%} & {\small{}0.73} & {\small{}2.17} & {\small{}15\%}  \tabularnewline
			\hline
			{\small{} $ 10 $}  & {\small{}0.69} & {\small{}2.40} & {\small{}14\%} & {\small{}0.73} & {\small{}2.17} & {\small{}15\%}  \tabularnewline
			\hline
		\end{tabular}
		\par\end{centering}
	\caption{Statistics of $\bar \theta_k$, $\bar \tau_k$ and $ |{\cal B}_k| $ for {\it QP-2} }\label{tab:QP2}
\end{table}
\vspace{-5mm}

In summary, computational results demonstrate that: 
i) AF(R) is the best method in terms of running time; 
ii) AF(R) (resp., AD(R)) improves the results of AF (resp. AD), while AC(R)  has similar performance as AC;
and iii) $ \bar \theta_k $ and $ \bar \tau_k $ are small and $ |{\cal B}_k|/k $ is no more than $ 18\% $.

\subsection{Matrix Completion} \label{subsec:MC}

This subsection considers a constrained version of the nonconvex low-rank matrix completion (NLRMC) problem.

We start by giving a few definitions.
Given parameters $\beta>0$ and $\tau>0$,
let $p:\R\to\R_+ $ denote the log-sum penalty defined as
$
p(t)= p_{\beta,\tau}(t) := \beta\log( 1+|t|/\tau)
$.
Let ${\cal Q}$ denote a subset of $\{1,\ldots,l\} \times \{1,\ldots,n\}$.
Let $ \Pi_{\cal Q}:\R^{l\times n}\to \R^{l\times n} $ denote a linear operator such that, for given $A\in \R^{l\times n}$, $\Pi_{\cal Q}(A)_{ij}=A_{ij}$ if $(i,j)\in {\cal Q} $, and $\Pi_{\cal Q}(A)_{ij}=0$ otherwise.


Given radius $R>0$, penalty parameter $ \mu>0 $, and an incomplete observation matrix $O \in \R^{\cal Q}$,
the constrained version of the NLRMC problem considered in this subsection is
\begin{equation}\label{eq:MC}
	\min \left\lbrace  \frac12 \|\Pi_{\cal Q}(Z-O)\|_F^2+
	\mu\sum_{i=1}^{r}p(\sigma_i(Z)) : Z \in {\cal B}_R \right\rbrace
\end{equation}
where $ r = \min \{l,n\}$, $\sigma_i(Z)$ is the $ i $-th singular value of $ Z $ and ${\cal B}_R = \{ Z \in \R^{l \times n} : \|Z\|_F \le R\}$.

It is discussed in \cite{liang2021average} that \eqref{eq:MC} is an instance of the N-SCO problem \eqref{eq:PenProb2Intro} and can be rewritten as $ \min \{ f(Z)+h(Z): Z\in\R^{l\times n} \} $
where
\begin{align*}
	f(Z)=\frac12 \|\Pi_{\cal Q}(Z-O)\|_F^2+ \mu\sum_{i=1}^{r}[p(\sigma_i(Z))-p_0\sigma_i(Z)], \\
	h(Z)=\mu p_0\|Z\|_* + I_{{\cal B}_R}(Z), \quad p_0=p'(0)=\frac{\beta}{\tau}
\end{align*}
and $ \|\cdot\|_*$ denotes the nuclear norm, i.e.,
$ \|\cdot\|_* := \sum_{i=1}^r \sigma_i(\cdot)$.
It follows from (48) of \cite{liang2021average} that the triple $ (m,M,L) $ satisfying \eqref{ineq:curv} is
\begin{equation}\label{eq:M-MC}
	(m, M, L)=(2\mu\kappa, 1, \max\{1,2\mu\kappa\})
\end{equation}
where $ \kappa=\beta/\tau^2 $.


We now describe the datasets {\it MovieLens 100K}\footnote{http://grouplens.org/datasets/movielens/}
and {\it FilmTrust}\footnote{http://guoguibing.github.io/librec/datasets.html\#filmtrust} considered in the numerical experiments. Each dataset contains an observed index set ${\cal Q}$ and an incomplete observed matrix $ O $ with rows, columns and nonzero entries representing users, items and ratings, respectively, from some collaborative filtering systems.
Table~\ref{tab:t7} lists basic statistics of the datasets.

\begin{table}[H]
	\begin{centering}
		\begin{tabular}{|>{\centering}p{3cm}|>{\centering}p{1.5cm}|>{\centering}p{1.6cm}|>{\centering}p{1.5cm}|>{\centering}p{1.5cm}|>{\centering}p{1.5cm}|}
			\hline 
			{Dataset} & {Users ($ l $)}& {Items (n)} &{Ratings} & {Density} & {Scale} \tabularnewline
			\hline 
			{\small{{\it MovieLens 100K}}}  & {\small{}943} & {\small{}1682} & {\small{}100000} & {\small{}6.30\%} & {\small{}[1,5]} \tabularnewline
			\hline
			{\small\text{{\it FilmTrust}}} & {\small{}1508} & {\small{}2071} & {\small{}35497} & {\small{}1.14\%} & {\small{}[0.5,4.0]} \tabularnewline
			\hline
		\end{tabular}
		\par\end{centering}
	\caption{Matrix completion datasets}\label{tab:t7}
\end{table}
\vspace{-5mm}

The radius $ R $ is chosen as the Frobenius norm of the matrix of size $ l \times n $
containing the same entries as $ O $ in $ {\cal Q} $ and entries outside of $ {\cal Q} $ being maximum of the scale (i.e., $ 5 $ (resp., $ 4 $) in the case of {\it MovieLens 100K} (resp., {\it FilmTrust})).
The set $ \Omega $ introduced in (A2) is set to be $ {\cal B}_R $ with $ R $ as the aforementioned radius.
It is easy to see that $ \Omega \supset {\cal H}  $, which is required in (A2), since $ \Omega={\cal B}_R = {\cal H}  $.

We start all seven methods from the same initial point $ Z_0 $ that is sampled from the standard Gaussian distribution and is within ${\cal B}_R$.

Numerical results of the seven methods for solving \eqref{eq:MC} with datasets {\it MovieLens 100K} and {\it FilmTrust} are given in Tables \ref{tab:t8} and \ref{tab:t9}, respectively. 
Each table addresses a collection of instances with the same dataset.
The first columns in Tables \ref{tab:t8} and \ref{tab:t9} present the values of $ m $ of the four instances computed according to \eqref{eq:M-MC} with four different triples $ (\mu,\beta,\tau) $.
In addition to the numbers of iterations and running times of all seven methods, the second to eighth columns of Tables \ref{tab:t8} and \ref{tab:t9} also provide the function values of \eqref{eq:MC} at the last iteration.
The bold numbers highlight the method that has the best performance (smallest function value or least running time) in an instance of \eqref{eq:MC}.



\begin{table}[H]
	\begin{centering}
		\begin{tabular}{|>{\centering}p{.4cm}|>{\centering}p{.6cm}>{\centering}p{.6cm}>{\centering}p{.8cm}>{\centering}p{.6cm}>{\centering}p{.8cm}>{\centering}p{.8cm}>{\centering}p{.8cm}|}
			\hline 
			{\small{}$m$} & \multicolumn{7}{c|}{\makecell{\small{}Function Value / \\ \small{}Iteration Count / \\ \small{}Running Time (s)}}
			\tabularnewline  
			\cline{2-8}
			& {\small{}UP} & {\small{}AD} & {\small{}AC} & {\small{}AF} & {\small{}AD(R)} & {\small{}AC(R)}  & {\small{}AF(R)}
			\tabularnewline
			\hline 
			{\small{}$ 4.4 $} & {\small{}2605\\ 521\\ 1545} & {\small{}2625\\  {1674}\\ 1946} & {\small{}2296\\  {1046}\\ 1242} & {\small{}\textbf{1836}\\ 375\\ 287} & {\small{}2625\\ 1674\\ 1946} & {\small{}2304\\ 904\\ 1087} & {\small{}1912\\ 305\\ \textbf{245}}
			\tabularnewline \hline
			{\small{}$ 8.9 $} & {\small{}4261\\ 576\\ 1621} & {\small{}4203\\ {1794}\\ 1930} & {\small{}3896\\  {4773}\\ 6519} & {\small{}\textbf{3617}\\ 291\\ 233} & {\small{} 4203\\ 1794\\ 1930} & {\small{}3914\\ 4511\\ 6245} & {\small{}{3797}\\ 241\\ \textbf{208}}
			\tabularnewline
			\hline
			{\small{}$ 20 $} & {\small{}4637\\ 676\\ 1914} & {\small{}4582\\  {2209}\\ 2364} & {\small{}4313\\  14892\\ 19948}  & {\small{}\textbf{4098}\\ 260\\ \textbf{212}} & {\small{} 4582\\ 2209\\ 2364} & {\small{}4312\\ 15708\\ 21666} & {\small{}4164\\ 304\\ {267}}
			\tabularnewline \hline
			{\small{}$ 30 $} & {\small{}6753\\ 606\\ 1628} & {\small{}6293\\ {1963}\\ 2104} & {\small{}6005\\  30815\\ 43172} & {\small{}\textbf{5333}\\  505\\ 417} & {\small{} 6293\\ 1963\\ 2104} & {\small{}5952\\ 27986\\ 38644} & {\small{}{5524}\\  413\\ \textbf{349}}
			\tabularnewline
			\hline
		\end{tabular}
		\par\end{centering}
	\caption{Numerical results for solving \eqref{eq:MC} with {\it MovieLens 100K}}\label{tab:t8}
\end{table}
\vspace{-3mm}

\begin{table}[H]
	\begin{centering}
		\begin{tabular}{|>{\centering}p{.4cm}|>{\centering}p{.6cm}>{\centering}p{.6cm}>{\centering}p{.8cm}>{\centering}p{.6cm}>{\centering}p{.8cm}>{\centering}p{.8cm}>{\centering}p{.8cm}|}
			\hline 
			{\small{}$m$} & \multicolumn{7}{c|}{\makecell{\small{}Function Value / \\ \small{}Iteration Count / \\ \small{}Running Time (s)}}
			\tabularnewline  
			\cline{2-8}
			& {\small{}UP} & {\small{}AD} & {\small{}AC} & {\small{}AF} & {\small{}AD(R)} & {\small{}AC(R)}  & {\small{}AF(R)}
			\tabularnewline
			\hline 
			{\small{}$ 4.4 $} & {\small{}1050\\ 584\\ 6460} & {\small{}1069\\  {2025}\\ 9063} & {\small{}981\\  {942}\\ 4072} & {\small{}{849}\\ 347\\ \textbf{991}} & {\small{}1069\\ 2025\\ 9063} & {\small{}988\\ 1053\\ 4546} & {\small{}\textbf{804}\\ 586\\ 1753}
			\tabularnewline \hline
			{\small{}$ 8.9 $} & {\small{}1814\\ 634\\ 7130} & {\small{}1854\\ {2410}\\ 11171} & {\small{}1759\\  {4312}\\ 22187} & {\small{}{1538}\\ 469\\ \textbf{1334}} & {\small{} 1854\\ 2410\\ 11171} & {\small{}1738\\ 5461\\ 29569} & {\small{}\textbf{1516}\\ 753\\ 2198}
			\tabularnewline
			\hline
			{\small{}$ 20 $} & {\small{}2120\\ 630\\ 7214} & {\small{}2064\\  {2665}\\ 12701} & {\small{}1988\\  13957\\ 73023}  & {\small{}\textbf{1739}\\ 676\\ 1959} & {\small{} 2064\\ 2665\\ 12701} & {\small{}{1993}\\  14379\\ 77128} & {\small{}1777\\ 528\\ \textbf{1617}}
			\tabularnewline \hline
			{\small{}$ 30 $} & {\small{}2980\\ 559\\ 6244} & {\small{}2917\\ {2365}\\ 11205} & {\small{}2855\\  19419\\ 100580} & {\small{}\textbf{2593}\\  533\\ \textbf{1582}} & {\small{} 2917\\ 2365\\ 11205} & {\small{}2853\\ 18515\\ 96675} & {\small{}\textbf{2593}\\  533\\ \textbf{1582}}
			\tabularnewline
			\hline
		\end{tabular}
		\par\end{centering}
	\caption{Numerical results for solving \eqref{eq:MC} with {\it FilmTrust}}\label{tab:t9}
\end{table}
\vspace{-3mm}

We now present the statistics of $ \bar \theta_k $, $ \bar \tau_k $ and $ |{\cal B}_k|/k $ of AF and AF(R) for solving the MC problem \eqref{eq:MC}. 

\begin{table}[H]
	\begin{centering}
		\begin{tabular}{|>{\centering}p{0.4cm}|>{\centering}p{1.2cm}|>{\centering}p{1.2cm}|>{\centering}p{1.2cm}|>{\centering}p{1.2cm}|>{\centering}p{1.2cm}|>{\centering}p{1.2cm}|}
			\hline 
			{\small{}$ m $} & \multicolumn{3}{c|}{AF} & \multicolumn{3}{c|}{AF(R)}
			\tabularnewline  
			\cline{2-7}
			& {\small$ \bar \theta_k $} & {\small $ \bar \tau_k $} & {\small $ |{\cal B}_k|/k $} & {\small$ \bar \theta_k $} & {\small $ \bar \tau_k $} & {\small $ |{\cal B}_k|/k $}
			\tabularnewline
			\hline 
			{\small{} $ 4.4 $}  & {\small{}1.07} & {\small{}1.23} & {\small{}6\%} & {\small{}1.12} & {\small{}1.20} & {\small{}4\%}  \tabularnewline
			\hline
			{\small{} $ 8.9 $}  & {\small{}1.04} & {\small{}1.53} & {\small{}8\%} & {\small{}1.02} & {\small{}1.48} & {\small{}10\%}  \tabularnewline
			\hline
			{\small{} $ 20 $}  & {\small{}0.97} & {\small{}2.16} & {\small{}9\%} & {\small{}1.00} & {\small{}1.88} & {\small{}13\%}  \tabularnewline
			\hline
			{\small{} $ 30 $}  & {\small{}1.02} & {\small{}2.49} & {\small{}7\%} & {\small{}1.02} & {\small{}2.40} & {\small{}11\%}  \tabularnewline
			\hline
		\end{tabular}
		\par\end{centering}
	\caption{Statistics of $\bar \theta_k$, $\bar \tau_k$ and $ |{\cal B}_k| $ for {\it MovieLens 100K} }\label{tab:MC1}
\end{table}
\vspace{-5mm}

\begin{table}[H]
	\begin{centering}
		\begin{tabular}{|>{\centering}p{0.4cm}|>{\centering}p{1.2cm}|>{\centering}p{1.2cm}|>{\centering}p{1.2cm}|>{\centering}p{1.2cm}|>{\centering}p{1.2cm}|>{\centering}p{1.2cm}|}
			\hline 
			{\small{}$ m $} & \multicolumn{3}{c|}{AF} & \multicolumn{3}{c|}{AF(R)}
			\tabularnewline  
			\cline{2-7}
			& {\small$ \bar \theta_k $} & {\small $ \bar \tau_k $} & {\small $ |{\cal B}_k|/k $} & {\small$ \bar \theta_k $} & {\small $ \bar \tau_k $} & {\small $ |{\cal B}_k|/k $}
			\tabularnewline
			\hline 
			{\small{} $ 4.4 $}  & {\small{}1.09} & {\small{}1.25} & {\small{}10\%} & {\small{}1.11} & {\small{}1.21} & {\small{}9\%}  \tabularnewline
			\hline
			{\small{} $ 8.9 $}  & {\small{}1.02} & {\small{}1.55} & {\small{}6\%} & {\small{}0.99} & {\small{}1.61} & {\small{}6\%}  \tabularnewline
			\hline
			{\small{} $ 20 $}  & {\small{}1.04} & {\small{}2.07} & {\small{}8\%} & {\small{}1.06} & {\small{}2.07} & {\small{}9\%}  \tabularnewline
			\hline
			{\small{} $ 30 $}  & {\small{}1.04} & {\small{}2.59} & {\small{}11\%} & {\small{}1.04} & {\small{}2.59} & {\small{}11\%}  \tabularnewline
			\hline
		\end{tabular}
		\par\end{centering}
	\caption{Statistics of $\bar \theta_k$, $\bar \tau_k$ and $ |{\cal B}_k| $ for {\it FilmTrust} }\label{tab:MC2}
\end{table}
\vspace{-5mm}


In summary, computational results demonstrate that: 
i) AF and AF(R) are the best two methods; 
ii) AD(R) does not restart and has the same performance as AD;
and iii) $ \bar \theta_k $ and $ \bar \tau_k $ are small and $ |{\cal B}_k|/k $ is no more than $ 13\% $.


\section{Concluding remarks}\label{sec:conclusion}
This paper studies the AC-FISTA method, which is
a FISTA-type ACG variant of the AC-ACG method
proposed in \cite{liang2021average},
for solving the N-SCO problem \eqref{eq:PenProb2Intro}.
At the $k$-th iteration, both methods
compute $y(\tx_k;M_k)$ defined in \eqref{eq:update} as a potential
candidate for the next iterate where
$M_k$ is an estimation of the local upper curvature of \eqref{eq:PenProb2Intro}
at $\tx_k$
obtained according to \eqref{eq:M}, and chooses
as the next iterate either
this point if it satisfies \eqref{ineq:descent} or the convex combination in \eqref{eq:ty} otherwise.
However, in contrast to AC-ACG, AC-FISTA computes $M_k$ according
to \eqref{eq:M} using the average of the observed upper curvatures
$C_k$'s defined in \eqref{eq:C} instead of the larger
upper-Lipschitz curvatures $\tilde C_k$'s defined in
the line above \eqref{eq:tC}. 
In addition, AC-FISTA performs only one composite resolvent evaluation during the  good iterations, and two composite resolvent evaluations in the bad ones, but has been
observed to perform an average of
about one composite resolvent evaluation per iteration in practice.
These two features together lead to a practical AC-FISTA variant that
substantially outperforms
previous ACG variants as well as the theoretical and practical
AC-ACG variants,
both in terms of running time and  solution quality.

We end this paper by discussing some possible extensions. 
First, even though we have not studied the convergence rate of
the practical AC-ACG variant of \cite{liang2021average},
we believe that such analysis will follow by using similar arguments
as the ones used in this paper to analyze AC-FISTA.
Second, numerical results show that the restart variant of AC-FISTA greatly improves the empirical performance of its original variant
but its convergence rate analysis has not been established
anywhere in the literature and is an interesting research
direction to pursue.

\section*{Data availability statements}

The datasets generated during and/or analyzed during the current study are available from the corresponding author on reasonable request.

\bibliographystyle{plain}
\bibliography{Proxacc_ref}


\end{document}